\documentclass[oneside,english]{amsart}
\usepackage[T1]{fontenc}
\usepackage[latin9]{inputenc}
\usepackage{mathrsfs}
\usepackage{amstext}
\usepackage{amsthm}
\usepackage{amssymb}
\usepackage[all]{xy}

\makeatletter
\numberwithin{equation}{section}
\numberwithin{figure}{section}
\theoremstyle{plain}
\newtheorem{thm}{\protect\theoremname}
\theoremstyle{definition}
\newtheorem{defn}[thm]{\protect\definitionname}
\theoremstyle{remark}
\newtheorem{rem}[thm]{\protect\remarkname}
\theoremstyle{plain}
\newtheorem{prop}[thm]{\protect\propositionname}
\theoremstyle{plain}
\newtheorem{lem}[thm]{\protect\lemmaname}
\theoremstyle{plain}
\newtheorem{cor}[thm]{\protect\corollaryname}

\usepackage[dvipsnames,svgnames,x11names,hyperref]{xcolor}
\usepackage{lmodern}
\renewcommand*{\epsilon}{\varepsilon}
\usepackage{amssymb}
\usepackage{amsfonts}
\usepackage{egothic}
\usepackage{xcolor}
\usepackage[T1]{fontenc}

\usepackage{url}
\usepackage{amsthm}
\usepackage{aliascnt}
\usepackage{lipsum}
\usepackage{mathrsfs}
\usepackage{esint}
\usepackage{url}
\usepackage{amsmath}
\usepackage{dsfont}
\usepackage{bbm}
\usepackage{pdflscape}
\usepackage{bbm}
\usepackage{bussproofs}


\makeatletter
\def\matrixobject@{%
  \edef \next@{={\DirectionfromtheDirection@ }}%
  \expandafter \toks@ \next@ \plainxy@
  \let\xy@@ix@=\xyq@@toksix@
  \xyFN@ \OBJECT@}
\let\xy@entry@@norm=\entry@@norm
\def\entry@@norm@patched{%
  \let\object@=\matrixobject@
  \xy@entry@@norm }
\AtBeginDocument{\let\entry@@norm\entry@@norm@patched}
\makeatother

\newcommand{\twocong}[2][0.5]{\ar@{}[#2] \save ?(#1)*{\cong}\restore}
\newcommand{\twoeq}[2][0.5]{\ar@{}[#2] \save ?(#1)*{=}\restore}
\newcommand{\ltwocell}[3][0.5]{\ar@{}[#2] \ar@{=>}?(#1)+/r 0.15cm/;?(#1)+/l 0.15cm/^{#3}}
\newcommand{\rtwocell}[3][0.5]{\ar@{}[#2] \ar@{=>}?(#1)+/l 0.15cm/;?(#1)+/r 0.15cm/^{#3}}
\newcommand{\utwocell}[3][0.5]{\ar@{}[#2] \ar@{=>}?(#1)+/d  0.15cm/;?(#1)+/u 0.15cm/_{#3}}
\newcommand{\dtwocell}[3][0.5]{ \ar@{}[#2] { \ar@{=>}?(#1)+/u  0.15cm/;?(#1)+/d 0.15cm/^{#3}}}
\newcommand{\ultwocell}[3][0.5]{\ar@{}[#2] \ar@{=>}?(#1)+/dr  0.15cm/;?(#1)+/ul 0.15cm/^{#3}}
\newcommand{\urtwocell}[3][0.5]{\ar@{}[#2] \ar@{=>}?(#1)+/dl  0.15cm/;?(#1)+/ur 0.15cm/^{#3}}
\newcommand{\dltwocell}[3][0.5]{\ar@{}[#2] \ar@{=>}?(#1)+/ur  0.15cm/;?(#1)+/dl 0.15cm/^{#3}}
\newcommand{\drtwocell}[3][0.5]{\ar@{}[#2] \ar@{=>}?(#1)+/ul  0.15cm/;?(#1)+/dr 0.15cm/^{#3}}

\newcommand{\myar}[2]{\ar^-{#1}[#2]}
\newcommand{\myard}[2]{\ar_-{#1}[#2]}

\usepackage[colorlinks=true,linkcolor=Dark Red, citecolor=Dark Green,linktoc=all]{hyperref}
\usepackage{amsmath}
\usepackage{cleveref}


\makeatletter

  \def\make@df@tag@@#1{%
    \gdef\df@tag{%
      \maketag@@@{\Hy@make@anchor#1}%
      \def\@currentlabel{#1}%
      \def\cref@currentlabel{[equation][2147483647][]#1}%
    }%
  }
  \def\make@df@tag@@@#1{%
    \gdef\df@tag{%
      \tagform@{\Hy@make@anchor#1}%
      \toks@\@xp{\p@equation{#1}}%
      \edef\@currentlabel{\the\toks@}%
      \edef\cref@currentlabel{[equation][2147483647][]\the\toks@}
    }%
  }
\makeatother

\numberwithin{thm}{subsection}

\makeatother

\usepackage{babel}
\providecommand{\corollaryname}{Corollary}
\providecommand{\definitionname}{Definition}
\providecommand{\lemmaname}{Lemma}
\providecommand{\propositionname}{Proposition}
\providecommand{\remarkname}{Remark}
\providecommand{\theoremname}{Theorem}

\begin{document}
\subjclass[2020]{18C15, 18N15}
\title{Presentations of Pseudodistributive Laws}
\author{Charles Walker}
\address{Department of Mathematics and Statistics, Masaryk University, Kotl{\'a}{\v r}sk{\'a}
2, Brno 61137, Czech Republic}
\email{\tt{charles.walker.math@gmail.com}}
\keywords{pseudodistributive laws, no-iteration, pseudomonads, warpings}
\begin{abstract}
By considering the situation in which the involved pseudomonads are
presented in no-iteration form, we deduce a number of alternative
presentations of pseudodistributive laws including a ``decagon''
form, a pseudoalgebra form, a no-iteration form, and a warping form.
As an application, we show that five coherence axioms suffice in the
usual monoidal definition of a pseudodistributive law.
\end{abstract}

\maketitle
\tableofcontents{}

\section{Introduction}

Monads are one of the fundamental constructions in category theory,
and in recent years have also become more prevalent in computer science
\cite{bisimulation,variablebinding,Hyland2002}. Typically, a monad
on a category $\mathscr{C}$ is defined as an endofunctor $T\colon\mathscr{C}\to\mathscr{C}$
along with natural transformations $u\colon1_{\mathscr{C}}\to T$
and $m\colon T^{2}\to T$ satisfying three coherence conditions.

Distributive laws of monads were introduced by Beck \cite{beckdist}
and give a concise description of the data and coherence conditions
needed to compose two monads $\left(T,u,m\right)$ and $\left(P,\eta,\mu\right)$.
More precisely, Beck defines a distributive law of monads as a natural
transformation $\lambda\colon TP\to PT$ such that the below two triangles
and two pentagons commute
\[
\xymatrix@=1.5em{TP\ar[rr]^{\lambda} &  & PT &  & TP\ar[rr]^{\lambda} &  & PT\\
 & P\ar[ru]_{Pu}\ar[lu]^{uP} &  &  &  & T\ar[ru]_{\eta T}\ar[lu]^{T\eta}\\
T^{2}P\ar[d]_{mP}\ar[r]^{T\lambda} & TPT\ar[r]^{\lambda T} & PTT\ar[d]^{Pm} &  & TP^{2}\ar[d]_{T\mu}\ar[r]^{\lambda P} & PTP\ar[r]^{P\lambda} & P^{2}T\ar[d]^{\mu T}\\
TP\ar[rr]_{\lambda} &  & PT &  & TP\ar[rr]_{\lambda} &  & PT
}
\]
It is not hard to arrive at this set of four axioms. Indeed, given
a $\lambda\colon TP\to PT$ if one works out what is required to extend
the monad $\left(T,u,m\right)$ to a monad 
\[
\left(\widetilde{T},\widetilde{u},\widetilde{m}\right)\colon\mathbf{Kl}\left(P,\eta,\mu\right)\to\mathbf{Kl}\left(P,\eta,\mu\right)
\]
 on the Kleisli category of $P$, they arrive at two of these axioms
from the nullary and binary functoriality conditions of $\widetilde{T}$,
and the other two from naturality of $\widetilde{u}$ and $\widetilde{m}$.

It turns out that one may take a different approach to distributive
laws, based on the ``extensive'' (also called ``no-iteration''
or ``Kleisli triple'') presentation of monads as studied by Manes
\cite{Manes1976}, which in fact dates back to early work of Walters
\cite{WaltersPHD}. In this extensive form, a monad on a category
$\mathscr{C}$ is defined as an assignation on objects $T\colon\mathscr{C}_{\textnormal{ob}}\to\mathscr{C}_{\textnormal{ob}}$
with a family of arrows $u_{X}\colon X\to TX$ and functions $\mathscr{C}\left(X,TY\right)\to\mathscr{C}\left(TX,TY\right)$.
This data is then required to satisfy three different coherence axioms.
It is an interesting fact that the functoriality and naturality conditions
automatically follow from these three conditions\footnote{This simplification which happens in $\mathbf{Cat}$ when monads are
presented extensively is explained in detail by Marmolejo and Wood
\cite{noiteration1d}, as a consequence of any functor having a right
adjoint in the bicategory of profunctors $\mathbf{Prof}.$}. Moreover, this family of functions $\mathscr{C}\left(X,TY\right)\to\mathscr{C}\left(TX,TY\right)$
comprise what is called a ``pasting operator''. Such a pasting operator
of a monad is also called the extension operator of an extension system
\cite{NoiterationMixedAnother,noiteration1d}. 

If one works out what is needed to extend the monad $\left(T,u,m\right)$
to the Kleisli category of $P$, with this extension now defined in
extensive form, they will naturally arrive at three coherence conditions
for distributive laws corresponding to the three axioms for a monad
in extensive form. These three axioms are the two triangles from earlier,
but with the two pentagons replaced by a single decagon condition
\[
\xymatrix@=1em{TPTPT\ar[rr]^{TP\lambda T}\ar[d]_{\lambda TPT} &  & TP^{2}T^{2}\ar[rr]^{TP^{2}m} &  & TP^{2}T\ar[rr]^{T\mu T} &  & TPT\ar[d]^{\lambda T}\\
PT^{2}PT\ar[d]_{PmPT} &  &  &  &  &  & PT^{2}\ar[d]^{Pm}\\
PTPT\ar[rr]_{P\lambda T} &  & P^{2}T^{2}\ar[rr]_{P^{2}m} &  & P^{2}T\ar[rr]_{\mu T} &  & PT
}
\]
In one dimension, the difference between these two definitions of
distributive law is rather trivial. However, in two dimensions the
difference becomes significant, as this means pseudodistributive laws
can be naturally defined taking three modifications as the basic data
rather than the usual four \cite{marm1999}. Moreover, the reduction
in the data makes the coherence conditions for pseudodistributive
laws much easier to understand conceptually. Interestingly, one recovers
a variant of the well known triangle and pentagon axioms for monoidal
categories \cite{MacLane} (though it is actually the bicategorical
version of these axioms \cite{MacPare} which is relevant here).

It is this understanding, along with known coherence results concerning
the triangle and pentagon equations in the context of bicategories
\cite{kellymaclanecoherence} and pseudomonads \cite{marm1997,mndwarp},
that allow us to deduce that three of Marmolejo and Wood's eight coherence
axioms for pseudodistributive laws \cite{marm2008} are redundant
in the sense that they follow from the other five.

However, the goal of this paper is not just to reduce the coherence
axioms of pseudodistributive laws, but to give other presentations
of them. For instance, the reader will notice the composites $\lambda T\cdot Pm\colon TPT\to PT$
appearing in the decagon, so that denoting this composite by $\alpha$,
the decagon may be seen as the hexagon axiom
\[
\xymatrix@=1em{TPTPT\ar[rr]^{TP\alpha}\ar[dd]_{\alpha PT} &  & TP^{2}T\ar[rr]^{T\mu T} &  & TPT\ar[dd]^{\alpha}\\
\\
PTPT\ar[rr]_{P\alpha} &  & P^{2}T\ar[rr]_{\mu T} &  & PT
}
\]

These morphisms $\alpha\colon TPT\to PT$ (or morphisms $PTP\to PT$
in the dual situation\footnote{One might denote such morphisms by $\mathbf{res}_{T\eta}\colon PTP\to PT$
as they exhibit $T\eta$ as a $P$-embedding, using the ``admissibility''
point of view \cite{WalkerDL}. In \cite{WalkerDL} the dual problem
of extending to pseudo-algebras extensively was considered, though
in the simpler lax-idempotent case \cite{zober1976,kock1995}, and
the decagon conditions were not recognized.

When $P$ is only an endofunctor, one is forced to use this dual version.
In particular, a dual version of this hexagon appeared in \cite{Chk}
in the context of wreaths.}) should be familiar to the reader, appearing in the characterization
of distributive laws in terms of Kleisli and Eilenberg-Moore objects
\cite{marm2002}. Indeed, such characterizations are often useful
for considering distributive laws when Kleisli and Eilenberg-Moore
objects do not exist \cite{marm2002}. 

Interestingly, this hexagon axiom leads to a simpler version of these
characterizations of \cite{marm2002}. It turns out that distributive
laws $\lambda$ are in bijection with morphisms $\alpha\colon TPT\to PT$
rendering commutative the diagrams
\begin{equation}
\xymatrix@=1em{TPT\ar[rr]^{\alpha} &  & PT & T^{2}\myar{T\eta T}{rr}\ar[dd]_{m} &  & TPT\ar[dd]^{\alpha} & TPTPT\ar[rr]^{TP\alpha}\ar[dd]_{\alpha PT} &  & TP^{2}T\ar[rr]^{T\mu T} &  & TPT\ar[dd]^{\alpha}\\
\\
PT\ar[uu]^{uPT}\ar[rruu]_{\textnormal{id}} &  &  & T\myard{\eta T}{rr} &  & PT & PTPT\ar[rr]_{P\alpha} &  & P^{2}T\ar[rr]_{\mu T} &  & PT
}
\label{mindef1}
\end{equation}
which we refer to as the algebra definition (as the $T$-algebra axioms
on $\alpha$ follow from the above conditions). This definition is
closely connected to a definition of distributive laws in terms of
pasting operators $\left(-\right)^{\lambda}$ due to Marmolejo and
Wood \cite[Theorem 6.2]{noiteration1d}. Indeed, we get a simplification
of their result, finding that with the monad $P$ defined extensively,
a distributive law is a pasting  operator $\left(-\right)^{\lambda}\colon\mathscr{C}\left(-,PT-\right)\to\mathscr{C}\left(T-,PT-\right)$
such that all $f\colon X\to PTY$, $g\colon Y\to PTZ$ and $k\colon X\to TY$
render commutative the three diagrams
\[
\xymatrix@=1em{X\ar@/^{0pc}/[dd]_{uX}\ar@/^{1pc}/[rrdd]^{f} &  &  &  & TX\ar[dd]_{k^{T}}\ar@/^{1pc}/[rrdd]^{\left(\eta TY\cdot k\right)^{\lambda}} &  &  &  & TX\ar[dd]_{f^{\lambda}}\ar@/^{1pc}/[rrdd]^{\left(\left(g^{\lambda}\right)^{P}f\right)^{\lambda}}\\
 & \; &  &  & \; &  &  &  &  & \;\\
TX\ar[rr]_{f^{\lambda}} &  & PTY &  & TY\myard{\eta TY}{rr} &  & PTY &  & PTY\ar[rr]_{\left(g^{\lambda}\right)^{P}} &  & PTZ
}
\]

Finally we give the so called ``warping'' definition, which describes
distributive laws in terms of the data of the extended monad's resulting
Kleisli category structure. Of the corresponding three conditions
\[
\xymatrix@=1em{\mathscr{C}\left(X,PTY\right)\myard{\left(-\right)_{X,Y}^{\lambda}\cdot\left(-\right)_{X}^{u}}{rrr}\ar@/^{1.3pc}/[rrrrr]^{\textnormal{id}} &  & \; & \mathscr{C}\left(TX,PTY\right)\mathscr{C}\left(X,TX\right)\myard{\circ}{rr} &  & \mathscr{C}\left(X,PTY\right)}
\]
\[
\xymatrix@=1em{\mathscr{C}\left(X,TY\right)\myar{\left(-\right)_{X,TY}^{\eta}}{rrr}\myard{\left(-\right)_{TY}^{\eta}\cdot\left(-\right)_{X,Y}^{T}\quad\quad}{rrrd} &  & \; & \mathscr{C}\left(X,PTY\right)\myar{\left(-\right)_{X,Y}^{\lambda}}{rr} &  & \mathscr{C}\left(TX,PTY\right)\\
 &  &  & \mathscr{C}\left(TY,PTY\right)\mathscr{C}\left(TX,TY\right)\myard{\circ}{rur}
}
\]
\[
\xymatrix@=1em{\mathscr{C}\left(Y,PTZ\right)\mathscr{C}\left(X,PTY\right)\myar{\quad\left(\left(-\right)_{Y,Z}^{\lambda}\right)_{TY,TZ}^{P}\mathscr{C}\left(X,PTY\right)}{rrrr}\ar[d]^{\mathscr{C}\left(Y,PTZ\right)\left(-\right)_{X,Y}^{\lambda}} &  &  &  & \mathscr{C}\left(PTY,PTZ\right)\mathscr{C}\left(X,PTY\right)\ar[d]^{\circ}\\
\mathscr{C}\left(Y,PTZ\right)\mathscr{C}\left(TX,PTY\right)\ar[d]^{\left(\left(-\right)_{Y,Z}^{\lambda}\right)_{TY,TZ}^{P}\mathscr{C}\left(TX,PTY\right)} &  &  & \; & \mathscr{C}\left(X,PTZ\right)\ar[d]^{\left(-\right)_{X,Z}^{\lambda}}\\
\mathscr{C}\left(PTY,PTZ\right)\mathscr{C}\left(TX,PTY\right)\myard{\circ}{rrrr} &  &  &  & \mathscr{C}\left(TX,PTZ\right)
}
\]
the first and third conditions are a unit law and associativity law
for the Kleisli category of the extended monad $\widetilde{T}$, where
the second is slightly stronger in that it gives both the remaining
unit law as well as a compatibility condition.

In this paper we will consider the more general 2-dimensional ``pseudo''
versions of the above to better understand pseudodistributive laws.
This allows us to address some problems which have not been practical
to solve until now (due to the complicated coherence conditions).
Perhaps most important is defining pseudodistributive laws in terms
of pasting operators $\left(-\right)^{\lambda}$, which is essential
in the setting of relative pseudomonads \cite{relative}, where one
is forced to use pasting operators. This is perhaps the simplest and
most natural presentation\footnote{One might argue the warping definition is the most natural, since
it is based entirely on (pseudo)pasting operators. However, this presentation
is more complicated to state.}.

It remains unclear how to generalize these results to distributive
laws where both monads are relative. Solving this problem will likely
involve some combination of the methods of this paper, the case when
one is relative \cite{relativedist}, and left and right extension
operators \cite{NoiterationMixedAnother}.

\subsection{Structure of the paper}

In Section \ref{psm} we recall the monoidal and extensive (no-iteration)
definitions of pseudomonads, and their coherence axioms. Then in Section
\ref{2dcase} we give five new presentations of pseudodistributive
laws $\lambda\colon TP\to PT$ of pseudomonads; namely:
\begin{enumerate}
\item the ``pseudomonoidal'' definition. This is the pseudo version of
the usual definition due to Beck \cite{beckdist}, involving a pseudonatural
transformation $\lambda\colon TP\to PT$ and four invertible modifications
comprising two triangles and two pentagons satisfying five coherence
axioms;
\item the ``Kleisli-decagon'' definition. This involves the decagonal
conditions one finds for distributive laws when the involved monads
are presented in extensive form and the usual $\lambda\colon TP\to PT$
is taken as the data. This version comprises three modifications satisfying
a version of the triangle and pentagon equations, plus a third compatibility
axiom\footnote{The ``Kleisli'' prefix refers to the fact the decagon starts with
$TPTPT$, as happens when one extends to the Kleisli category extensively.
There is also a dual version starting from $PTPTP$.};
\item the ``pseudoalgebra'' definition in terms of maps $\alpha\colon TPT\to PT$.
This is a reduced version of the above in which a change of variables
leads to a simplification in the axioms. Moreover, this definition
may be regarded as a ``base case'' for definitions of pseudodistributive
laws in terms of pasting operators $\left(-\right)^{\lambda}$ as
one may apply such pasting operators to identities to recover $\alpha$;
\item the ``no-iteration'' definition in terms of pseudo-pasting operators
\[
\left(-\right)^{\lambda}\colon\mathscr{C}\left(-,PT-\right)\to\mathscr{C}\left(T-,PT-\right).
\]
This no-iteration definition is intended to avoid any iteration of
the involved pseudomonads $T$ and $P$, which is important in the
``relative'' case \cite{relative};
\item the ``warping'' definition in terms of the data of the Kleisli bicategory
of the extended pseudomonad $\widetilde{T}$. This formulation allows
for applications of MacLane and Paré's coherence theorem \cite{MacPare},
and hence applications to the corresponding pseudodistributive law
data in the earlier formulations. Moreover, this generalizes Street
and Lack's equivalence of monads and warpings \cite{mndwarp} to one
of distributive laws and ``distributivity warpings''.
\end{enumerate}
The third and forth presentations above give an improvement of a result
of Marmolejo and Wood \cite[Theorem 6.2]{noiteration1d}, both simplifying
and generalizing from one to two dimensions. In fact, understanding
this result was the original motivation for this paper. 

In Section \ref{2dproof} we justify our five definitions of pseudodistributive
law by proving they are in equivalence with compatible extensions
of $T$ to the Kleisli bicategory of the pseudomonad $P$. In the
case of the pseudomonoidal definition, we will also explain in Subsection
\ref{showred} how one deduces the redundancy of three of the usual
pseudodistributive law axioms from the decagon formulation's version
of the triangle and pentagon equations.

\section{Acknowledgments}

The author would like to thank the members of the Masaryk University
Algebra Seminar, CT20-21 conference, and Open House on Category Theory
2021 for their questions and comments. We also thank Nathanael Arkor
for correcting some typos and his version of the middle no-iteration
axiom in dimension one. Finally we are grateful to the anonymous referee
for their useful feedback and suggestions which led to a number of
improvements in this paper and especially the last section.

\section{Pseudomonads\label{psm}}

We start this section by recalling two equivalent definitions of pseudomonad
(namely the monoidal and no-iteration forms), including the three
axioms which are known to be redundant in monoidal form by results
of Marmolejo \cite{marm1997}, and in no-iteration form (also called
extensive form) by results of Street and Lack \cite{mndwarp}.

\subsection{Pseudomonads in pseudomonoidal and no-iteration form}

In order to define pseudomonads, we first need the notions of pseudonatural
transformations and modifications. The notion of pseudonatural transformation
is the (weak) 2-categorical version of natural transformation. Modifications,
also defined below, take the place of morphisms between pseudonatural
transformations.
\begin{defn}
A \emph{pseudonatural transformation} of pseudofunctors $t\colon F\Rightarrow G\colon\mathscr{A}\to\mathscr{B}$
where $\mathscr{A}$ and $\mathscr{B}$ are bicategories provides
for each 1-cell $f\colon\mathcal{A}\to\mathcal{B}$ in $\mathscr{A}$,
1-cells $t_{\mathcal{A}}$ and $t_{\mathcal{B}}$ and an invertible
2-cell $t_{f}$ in $\mathscr{B}$ as below
\[
\xymatrix@=1em{F\mathcal{A}\ar[rr]^{Ff}\ar[dd]_{t_{\mathcal{A}}} &  & F\mathcal{B}\ar[dd]^{t_{\mathcal{B}}}\\
 & \ar@{}[]|-{\overset{t_{f}}{\implies}}\\
G\mathcal{A}\ar[rr]_{Gf} &  & G\mathcal{B}
}
\]
coherent with composition as in \cite[Definition 2.2]{kelly1974}.
Given two pseudonatural transformations $t,s\colon F\Rightarrow G\colon\mathscr{A}\to\mathscr{B}$
as above, a \emph{modification} $\alpha\colon s\Rrightarrow t$ consists
of, for every object $\mathcal{A}\in\mathscr{A}$, a 2-cell $\alpha_{\mathcal{A}}\colon t_{\mathcal{A}}\Rightarrow s_{\mathcal{A}}$
such that for each 1-cell $f\colon\mathcal{A}\to\mathcal{B}$ in $\mathscr{A}$
we have the equality $\left(\alpha_{\mathcal{B}}\cdot Ff\right)\circ t_{f}=s_{f}\circ\left(Gf\cdot\alpha_{\mathcal{A}}\right)$.
\end{defn}

By considering pseudomonads as pseudomonoids in a Gray-monoid of endo-pseudofunctors
one naturally arrives at the following definition.
\begin{defn}
A \emph{pseudomonad (in pseudomonoidal form)} on a bicategory $\mathscr{C}$
consists of a pseudofunctor equipped with pseudonatural transformations
as below
\[
T\colon\mathscr{C}\to\mathscr{C},\qquad u\colon1_{\mathscr{C}}\to T,\qquad m\colon T^{2}\to T
\]
along with three invertible modifications
\[
\xymatrix@=1em{T\ar[rr]^{uT}\ar[rdrd]_{\textnormal{id}} &  & T^{2}\ar[dd]^{m} &  & T\ar[ll]_{Tu}\ar[ldld]^{\textnormal{id}} &  & T^{3}\ar[rr]^{Tm}\ar[dd]_{mT} &  & T^{2}\ar[dd]^{m}\\
 & \;\ar@{}[ru]|-{\overset{\alpha}{\Longleftarrow}_{\;}} &  & \ar@{}[lu]|-{\overset{\beta}{\Longleftarrow_{\;}}}\\
 &  & T &  &  &  & T^{2}\ar[rr]_{m} &  & T\ar@{}[lulu]|-{\overset{\gamma}{\Longleftarrow}}
}
\]
subject to the two coherence axioms
\[
\xymatrix@=1em{ &  &  &  & T^{2}\ar[rrd]^{m} &  &  &  &  &  & T^{3}\ar[rrd]^{Tm}\\
T^{2}\ar[rr]^{TuT} &  & T^{3}\ar[rru]^{Tm}\ar[rrd]_{mT} &  & \ar@{}[]|-{\Downarrow\gamma} &  & T & = & T^{2}\ar[rur]^{TuT}\ar[rrd]_{TuT}\ar[rrrr]|-{\textnormal{id}} &  & \ar@{}[u]|-{\Downarrow T\alpha}\ar@{}[d]|-{\Downarrow\beta T} &  & T^{2}\ar[rr]^{m} &  & T\\
 &  &  &  & T^{2}\ar[rru]_{m} &  &  &  &  &  & T^{3}\ar[urr]_{mT}
}
\]
\[
\xymatrix@=1em{T^{4}\ar[rr]^{T^{2}m}\ar[dd]_{mT^{2}}\ar[rdrd]^{TmT} &  & T^{3}\ar[rrdd]^{Tm} &  &  &  & T^{4}\ar[rr]^{T^{2}m}\ar[dd]_{mT^{2}} &  & T^{3}\ar[rrdd]^{Tm}\ar[dd]^{mT}\\
 &  & \ar@{}[]|-{\quad\overset{T\gamma}{\Longleftarrow}} &  &  &  &  & \ar@{}[]|-{\overset{m_{m}^{-1}}{\Longleftarrow}}\\
T^{3}\ar[rrdd]_{mT} & \ar@{}[]|-{\overset{\gamma T}{\Longleftarrow}} & T^{3}\ar[rr]^{Tm}\ar[dd]_{mT} &  & T^{2}\ar[dd]^{m} & = & T^{3}\ar[rrdd]_{mT}\ar[rr]_{Tm} &  & T^{2}\ar[rrdd]^{m} & \ar@{}[]|-{\overset{\gamma}{\Longleftarrow}} & T^{2}\ar[dd]^{m}\\
 &  &  & \ar@{}[]|-{\overset{\gamma}{\Longleftarrow}} &  &  &  &  & \ar@{}[]|-{\overset{\gamma}{\Longleftarrow}}\\
 &  & T^{2}\ar[rr]_{m} &  & T &  &  &  & T^{2}\ar[rr]_{m} &  & T
}
\]
\end{defn}

\begin{rem}
\label{consequences} One should note here that there are three useful
consequences of these pseudomonad axioms \cite[Proposition 8.1]{marm1997}
motivated by results of Kelly \cite{kellymaclanecoherence}. These
are as follows:
\[
\xymatrix@=1em{ &  &  &  & T^{2}\ar[rrd]^{m} &  &  &  &  &  & T\ar[rrd]^{uT}\\
1_{\mathscr{C}}\ar[rr]^{u} &  & T\ar[rru]^{uT}\ar[rrd]_{Tu}\ar[rrrr]|-{\textnormal{id}} &  & \ar@{}[u]|-{\Downarrow\alpha}\ar@{}[d]|-{\Downarrow\beta} &  & T & = & 1_{\mathscr{C}}\ar[rru]^{u}\ar[rrd]_{u} &  & \ar@{}[]|-{\Downarrow u_{u}^{-1}} &  & T^{2}\ar[rr]^{m} &  & T\\
 &  &  &  & T^{2}\ar[rru]_{m} &  &  &  &  &  & T\ar[rru]_{Tu}
}
\]
\[
\xymatrix@=1em{T^{2}\ar[rr]^{uT^{2}}\ar[rrdd]_{\textnormal{id}} &  & T^{3}\ar[rr]^{Tm}\ar[dd]_{mT}\ar@{}[dl]|-{\overset{\alpha T}{\Longleftarrow}} &  & T^{2}\ar[dd]^{m} &  &  &  & T^{3}\ar[rr]^{Tm}\ar@{}[rdrd]|-{\Downarrow u_{m}\;} &  & T^{2}\ar[rr]^{m}\ar@{}[dr]|-{\overset{\alpha}{\Longleftarrow}} &  & T\\
 & \; &  & \ar@{}[]|-{\overset{\gamma}{\Longleftarrow}} &  &  & = &  &  &  &  & \;\\
 &  & T^{2}\ar[rr]_{m} &  & T &  &  &  & T^{2}\ar[uu]^{uT^{2}}\ar[rr]_{m} &  & T\ar[uu]^{uT}\ar[ruru]_{\textnormal{id}}
}
\]
\[
\xymatrix@=1em{T^{2}\ar[rr]^{T^{2}u}\ar[rrdd]_{\textnormal{id}} &  & T^{3}\ar[rr]^{mT}\ar[dd]_{Tm}\ar@{}[dl]|-{\overset{T\beta^{-1}}{\Longleftarrow}} &  & T^{2}\ar[dd]^{m} &  &  &  & T^{3}\ar[rr]^{mT}\ar@{}[rdrd]|-{\Downarrow m_{u}^{-1}\;\;} &  & T^{2}\ar[rr]^{m}\ar@{}[dr]|-{\overset{\beta^{-1}}{\Longleftarrow}} &  & T\\
 & \; &  & \ar@{}[]|-{\overset{\gamma^{-1}}{\Longleftarrow}} &  &  & = &  &  &  &  & \;\\
 &  & T^{2}\ar[rr]_{m} &  & T &  &  &  & T^{2}\ar[uu]^{T^{2}u}\ar[rr]_{m} &  & T\ar[uu]^{Tu}\ar[ruru]_{\textnormal{id}}
}
\]

We only mention these redundant axioms as they will be important later
on. Indeed, a version of these three redundant axioms appear in the
coherence conditions for a pseudodistributive law, albeit in a very
convoluted way which is why it was not noticed earlier. As we will
later see in Section \ref{showred}, the first appears most directly,
whilst the appearance of the other two only becomes apparent when
one combines the two pentagons into a decagon.
\end{rem}

The definition of a pseudomonad in no-iteration form is due to Marmolejo
and Wood \cite{NoIteration}. However, it will be more convenient
to use the presentation given by Fiore, Gambino, Hyland and Winskel
\cite{relative} for relative pseudomonads (with the ``relative''
part taken to be an identity).
\begin{defn}
\label{pseudomonadextensive}\cite{NoIteration,relative} A \emph{pseudomonad
(in no-iteration form)} on a bicategory $\mathscr{C}$ consists of:
\begin{itemize}
\item an assignation on objects $\mathscr{C}_{\textnormal{ob}}\to\mathscr{C}_{\textnormal{ob}}\colon X\mapsto TX$;
\item for each $X\in\mathscr{C}$, a 1-cell $u_{X}\colon X\to TX$;
\item for each $X,Y\in\mathscr{C}$ a functor $\left(-\right)_{X,Y}^{T}\colon\mathscr{C}\left(X,TY\right)\to\mathscr{C}\left(TX,TY\right)$;
\item for each $f\colon X\to TY$, an isomorphism $\phi_{f}\colon f\Rightarrow f^{T}\cdot u_{X}$
natural in $f$;
\item for each $X\in\mathscr{C}$, an isomorphism $\theta_{X}\colon u_{X}^{T}\Rightarrow\textnormal{id}_{TX}$;
\item for each $f\colon X\to TY$ and $g\colon Y\to TZ$, an isomorphism
$\delta_{g,f}\colon\left(g^{T}\cdot f\right)^{T}\Rightarrow g^{T}\cdot f^{T}$
natural in $f$ and $g$;
\end{itemize}
satisfying the two coherence conditions involving the unitors and
associators of $\mathscr{C}$:
\begin{enumerate}
\item each $f\colon X\to TY$ renders commutative
\[
\xymatrix@=1em{f^{T}\myar{\phi_{f}^{T}}{rr}\ar@/_{1pc}/[rrrrdd]_{\textnormal{unitor}} &  & \left(f^{T}u_{X}\right)^{T}\myar{\delta_{f,u_{X}}}{rr} &  & f^{T}u_{X}^{T}\ar[dd]^{f^{T}\theta_{X}}\\
\\
 &  &  &  & f^{T}\cdot\textnormal{id}
}
\]
\item each $f\colon X\to TY$, $g\colon Y\to TZ$, and $h\colon Z\to TV$
renders commutative
\[
\xymatrix@=1em{ &  & \left(\left(h^{T}g\right)^{T}f\right)^{T}\ar[rd]^{\delta_{h^{T}g,f}}\ar[dl]_{\left(\delta_{h,g}f\right)^{T}}\\
 & \left(\left(h^{T}g^{T}\right)f\right)^{T}\ar[d]_{\textnormal{assoc.}} &  & \left(h^{T}g\right)^{T}f^{T}\ar[d]^{\delta_{h,g}f^{T}}\\
 & \left(h^{T}\left(g^{T}f\right)\right)^{T}\ar[d]_{\delta_{h^{T},g^{T}f}} &  & \left(h^{T}g^{T}\right)f^{T}\ar[d]^{\textnormal{assoc.}}\\
 & h^{T}\left(g^{T}f\right)^{T}\ar[rr]_{h^{T}\delta_{g,f}} &  & h^{T}\left(g^{T}f^{T}\right)
}
\]
\end{enumerate}
\end{defn}

\begin{rem}
\label{redundantremark} The three useful consequences of the pseudomonad
axioms listed earlier in Remark \ref{consequences} now become the
assertion \cite[Lemma 3.2]{relative} which states that any morphisms
$f\colon X\to TY$ and $g\colon Y\to TZ$ render commutative
\[
\xymatrix@=1em{u_{X}\ar@/_{1pc}/[rrdd]_{\textnormal{id}}\myar{\phi_{u_{X}}}{rr} &  & u_{X}^{T}u_{X}\ar[dd]^{\theta_{X}u_{X}} &  & \left(u_{Y}^{T}f\right)^{T}\myar{\delta_{u_{Y},f}}{rr}\ar@/_{1pc}/[rrdd]_{\left(\theta_{Y}f\right)^{T}} &  & u_{Y}^{T}f^{T}\ar[dd]^{\theta_{Y}f^{T}} &  & g^{T}f\ar[rrdd]_{g^{T}\phi_{f}}\myar{\phi_{g^{T}f}}{rr} &  & \left(g^{T}f\right)^{T}u_{X}\ar[dd]^{\delta_{g,f}u_{X}}\\
\\
 &  & u_{X} &  &  &  & f^{T} &  &  &  & g^{T}f^{T}u_{X}
}
\]
The redundancy of these pseudomonad axioms in no-iteration form was
first noticed by Street and Lack \cite{mndwarp}.

Given that these two versions of pseudomonads are in equivalence,
it is not at all surprising that there are three corresponding redundant
axioms in the two definitions. What is surprising (and is shown later)
is that this causes three of the usual pseudodistributive law axioms
to be redundant. It is not at all expected that the redundant three
of five pseudomonad axioms should correspond to three of the ten pseudodistributive
law axioms (as these are a very different looking sets of axioms).
In fact, this is the best situation one might hope for. 
\end{rem}

$\mathbf{Convention}$: Throughout the remainder of this paper we
will suppress the modification data when describing pseudomonads.
Thus instead of $\left(T,u,m,\alpha,\beta,\gamma\right)$ we simply
write $\left(T,u,m\right)$ or $T$. We will also suppress the modification
data for the pseudomonad $P$, and the isomorphisms in naturality
squares. The reason for this is the proof carries through the same
whether $T$ and $P$ are pseudomonads or 2-monads. It is only the
modification data on the extended pseudomonad that is important for
proving the results of this paper. Moreover, labeling all of this
data may be distracting or overwhelming for the reader, especially
in the larger diagrams which involve higher order expressions such
as $TPTPTPT$. 

We will also follow the convention of Street and Lack \cite{StreetFTM2},
where a distributive law of type $\lambda\colon TP\to PT$ is one
of $T$ over $P$.
\begin{defn}
\label{Kleisli2d} The \emph{Kleisli bicategory} of a pseudomonad
$\left(T,u,m\right)$ on a 2-category $\mathscr{C}$, denoted $\mathbf{Kl}\left(T\right)$,
is the bicategory with:
\begin{itemize}
\item the same objects as $\mathscr{C}$;
\item a morphism $f\colon X\rightsquigarrow Y$ in $\mathbf{Kl}\left(T\right)$
is a morphism $f\colon X\to TY$ in $\mathscr{C}$; 
\item the identity $\textnormal{id}_{X}\colon X\rightsquigarrow X$ on a
object $X$ is the unit $uX\colon X\to TX$;
\item for each $f\colon X\rightsquigarrow Y$ and $g\colon Y\rightsquigarrow Z$
the composite $g\cdot f\colon X\rightsquigarrow Z$ is given by
\[
\xymatrix{X\ar[r]^{f} & TY\myar{Tg}{r} & T^{2}Z\ar[r]^{mZ} & TZ}
.
\]
\end{itemize}
The unitality and associativity laws only hold up to coherent isomorphism,
thus giving only a bicategory. The coherence data for these laws is
constructed using the modifications comprising the pseudomonad. We
will omit the details here, as the reader can refer to \cite[Definition 4.1]{cheng2003}.
\end{defn}

\begin{rem}
The reader will notice the composite above may be written as $g^{T}\cdot f$
when the pseudomonad is presented in no-iteration form. As this formulation
of pseudomonad naturally lends itself to describing the Kleisli bicategory,
it is also sometimes called the Kleisli presentation.

Following this line of reasoning, we observe that the basic structure
required to form the Kleisli bicategory is a (pseudo) pasting operator
$\left(-\right)^{T}\colon\mathscr{C}\left(X,TY\right)\to\mathscr{C}\left(TX,TY\right)$
along with some data and axioms. If the reader is being consistent,
they will express all of the data and axioms in terms of pasting operators.
This gives the (unsimplified) \emph{warping} definition of a pseudomonad.
\end{rem}

\section{Presentations of pseudodistributive laws\label{2dcase}}

\subsection{Pseudomonoidal definition of pseudodistributive laws}

Even when dealing with strict 2-monads, it is often the case that
one has no strict distributive law between them, but only a \emph{pseudo}distributive
law where the usual diagrams (two triangles and two pentagons) only
commute up to invertible modifications \cite{cheng2003}. Work on
these ``pseudo'' versions of distributive laws started with Kelly
\cite{kellydist}, who considered the case where the usual axioms
held strictly with the exception of one of the pentagons. 

Later, pseudodistributive laws were considered in the general case
(where all four axioms only hold up to isomorphism) by Marmolejo \cite{marm1999},
who imposed nine coherence conditions on the four invertible modifications.
It was then later shown by Marmolejo and Wood \cite{marm2008}, that
one of the original nine axioms, in addition to a tenth axiom introduced
by Tanaka \cite{thesisTanaka}, are redundant, thus reducing the number
of coherence axioms to eight. 

We now give another reduction in the coherence axioms, using just
five to define a pseudodistributive law. We again follow the convention
of suppressing the modification data of the pseudomonads $T$ and
$P$.
\begin{rem}
Note that the usual ten coherence axioms for a pseudodistributive
law come from understanding the structure of a pseudomonad in the
Gray category of pseudomonads \cite{GambinoDL}. From there, it is
a matter of working out which are redundant in that they follow from
the others\footnote{An exception to this is in the lax idempotent setting, where one can
define pseudodistributive laws using different sets of coherence axioms.
However, it appears unlikely any choice would be better than the five
axioms given here in the general case.}.
\end{rem}

\begin{rem}
To explain why the ``pseudo'' version of distributive laws is interesting
to consider, we point the reader to the trivial fact that a strict
law is merely a strict monad morphism and opmorphism. This fact is
somewhat misleading as it fails in the weak (pseudo) setting where
an extra coherence axiom is required, as detailed in Theorem \ref{onlyW5}.
Hence this weak case motivates us to stop viewing distributive laws
as a monad morphism and opmorphism, but rather to think of them as
their own independent structure; namely, as a generalization of monads.
This is especially evident in the no-iteration setting, where the
data and axioms for a distributive law are similar to that of a monad.
This also follows Beck \cite{beckdist} who generalized the concept
of algebras from monads to distributive laws. These are ultimately
the algebras of the composite monad but with a simpler description
in terms of the distributive law data.
\end{rem}

\begin{defn}
A \emph{pseudodistributive law (in pseudomonoidal form)} of pseudomonads
$\left(T,u,m\right)$ over $\left(P,\eta,\mu\right)$ is a pseudonatural
transformation $\lambda\colon TP\to PT$ and four invertible modifications
as below\footnote{The directions of the modifications below are chosen such that they
will naturally compose into decagons later on, and such that the directions
of the induced pseudomonad modifications will match with that of a
no-iteration pseudomonad as in Definition \ref{pseudomonadextensive}
(which is defined as in \cite{relative}). Though these choices of
directions do not matter in the sense that these modifications are
invertible, it will make the later proofs easier to follow.}
\[
\xymatrix@=1.5em{TP\ar[rr]^{\lambda} & \; & PT &  &  & TP\ar[rr]^{\lambda} & \; & PT\\
 & P\ar[ru]_{Pu}\ar[lu]^{uP}\ultwocell[0.65]{u}{\omega_{1}} &  &  &  &  & T\ar[ru]_{\eta T}\ar[lu]^{T\eta}\drtwocell[0.6]{u}{\omega_{2}}\\
T^{2}P\ar[d]_{mP}\ar[r]^{T\lambda} & TPT\ar[r]^{\lambda T}\dltwocell[0.5]{d}{\omega_{3}} & PTT\ar[d]^{Pm} &  &  & TP^{2}\ar[d]_{T\mu}\ar[r]^{\lambda P} & PTP\ar[r]^{P\lambda}\urtwocell[0.5]{d}{\omega_{4}} & P^{2}T\ar[d]^{\mu T}\\
TP\ar[rr]_{\lambda} & \; & PT &  &  & TP\ar[rr]_{\lambda} & \;\; & PT
}
\]
satisfying the following five coherence axioms. The first two coherence
axioms are the unitality axioms of a pseudomonad morphism and pseudomonad
opmorphism

\begin{equation}
\tag{{W1}}\xymatrix@=1.5em{TP\ar[rr]^{TuP}\urtwocell[0.6]{rdr}{T\omega_{1}}\ar[d]_{\textnormal{id}}\ar@/^{1.7pc}/[rrr]^{\textnormal{id}} &  & T^{2}P\ar[r]^{mP}\ar[d]_{T\lambda} & TP\ar[dd]^{\lambda}\\
TP\ar[rr]_{TPu}\ar[d]_{\lambda} &  & TPT\urtwocell[0.7]{r}{\omega_{3}}\ar[d]_{\lambda T} & \; & \ar@{}[]|-{=} & \textnormal{id}_{\lambda}\\
PT\ar[rr]_{PTu}\ar@/_{1.7pc}/[rrr]_{\textnormal{id}} &  & PT^{2}\ar[r]_{Pm} & PT
}
\label{W1}
\end{equation}
\begin{equation}
\tag{{W2}}\xymatrix@=1.5em{TP\ar[rr]^{T\eta P}\ar[d]_{\textnormal{id}}\ar@/^{1.7pc}/[rrr]^{\textnormal{id}}\dltwocell[0.4]{rdr}{\omega_{2}P} &  & TP^{2}\ar[r]^{T\mu}\ar[d]_{\lambda P} & TP\ar[dd]^{\lambda}\\
TP\ar[rr]_{\eta TP}\ar[d]_{\lambda} & \; & PTP\ar[d]_{P\lambda}\dltwocell[0.5]{r}{\omega_{4}} & \; & \ar@{}[]|-{=} & \textnormal{id}_{\lambda}\\
PT\ar[rr]_{\eta PT}\ar@/_{1.7pc}/[rrr]_{\textnormal{id}} &  & P^{2}T\ar[r]_{\mu T} & PT
}
\label{W2}
\end{equation}
The next two axioms are the associativity axioms of a pseudomonad
morphism and pseudomonad opmorphism
\begin{equation}
\tag{{W3}}\xymatrix@=1.5em{ & T^{3}P\ar[d]_{TmP}\ar[r]^{T^{2}\lambda}\ar@/_{0.5pc}/[ld]_{mTP} & T^{2}PT\ar[r]^{T\lambda T}\dltwocell[0.5]{d}{T\omega_{3}} & TPT^{2}\ar[d]^{TPm}\myar{\lambda T^{2}}{r} & PT^{3}\ar[d]^{PTm}\\
T^{2}P\ar@/_{0.5pc}/[rd]_{mP} & T^{2}P\ar[d]_{mP}\ar[rr]_{T\lambda} & \; & TPT\ar[r]_{\lambda T}\dltwocell[0.5]{dl}{\omega_{3}} & PT^{2}\ar[d]^{Pm}\\
 & TP\ar[rrr]_{\lambda} & \; & \; & PT\\
 &  & \ar@{}[r]|-{=} & \;\\
 & T^{3}P\myar{T^{2}\lambda}{r}\ar[d]_{mTP} & T^{2}PT\ar[r]^{T\lambda T}\ar[d]_{mPT} & TPT^{2}\ar[r]^{\lambda T^{2}}\dltwocell[0.5]{d}{\omega_{3}T} & PT^{3}\ar[d]^{PmT}\ar@/^{0.5pc}/[rd]^{PTm}\\
 & T^{2}P\ar[d]_{mP}\ar[r]_{T\lambda} & TPT\ar[rr]_{\lambda T}\dltwocell[0.5]{dr}{\omega_{3}} & \; & PT^{2}\ar[d]^{Pm} & PT^{2}\ar@/^{0.5pc}/[dl]^{Pm}\\
 & TP\ar[rrr]_{\lambda} & \; &  & PT
}
\label{W3}
\end{equation}
\begin{equation}
\tag{{W4}}\xymatrix@=1.5em{ & TP^{3}\ar[d]_{T\mu P}\ar[r]^{\lambda P^{2}}\ar@/_{0.5pc}/[ld]_{TP\mu} & PTP^{2}\ar[r]^{P\lambda P}\urtwocell[0.6]{d}{\omega_{4}P} & P^{2}TP\ar[d]^{\mu TP}\myar{P^{2}\lambda}{r} & P^{3}T\ar[d]^{\mu PT}\\
TP^{2}\ar@/_{0.5pc}/[rd]_{T\mu} & TP^{2}\ar[d]_{T\mu}\ar[rr]_{\lambda P} & \; & PTP\ar[r]_{P\lambda}\urtwocell[0.55]{dl}{\omega_{4}} & P^{2}T\ar[d]^{\mu T}\\
 & TP\ar[rrr]_{\lambda} & \; & \; & PT\\
 &  & \ar@{}[r]|-{=} & \;\;\\
 & TP^{3}\myar{\lambda P^{2}}{r}\ar[d]_{TP\mu} & PTP^{2}\ar[r]^{P\lambda P}\ar[d]_{PT\mu} & P^{2}TP\myar{P^{2}\lambda}{r}\urtwocell[0.6]{d}{P\omega_{4}} & P^{3}T\ar[d]^{P\mu T}\ar@/^{0.5pc}/[rd]^{\mu PT}\\
 & TP^{2}\ar[d]_{T\mu}\ar[r]_{\lambda P} & PTP\ar[rr]_{P\lambda}\urtwocell[0.55]{dr}{\omega_{4}} & \; & P^{2}T\ar[d]^{\mu T} & P^{2}T\ar@/^{0.5pc}/[dl]^{\mu T}\\
 & TP\ar[rrr]_{\lambda} & \; &  & PT
}
\label{W4}
\end{equation}
The last axiom ensures that the pentagons $\omega_{3}$ and $\omega_{4}$
are compatible, and asks
\begin{equation}
\tag{{W5}}\xymatrix@=1.5em{ &  & T^{2}P\ar@/^{0pc}/[rr]^{T\lambda}\dtwocell[0.5]{d}{T\omega_{4}} &  & TPT\ar[dr]^{\lambda T}\\
 &  & TP^{2}T\ar[rd]^{\lambda PT}\ar[rur]_{T\mu T} &  &  & PT^{2}\ar[rd]^{Pm}\\
T^{2}P^{2}\ar[r]^{T\lambda P}\ar[urur]^{T^{2}\mu}\ar[rdrd]_{mP^{2}} & TPTP\ar[ur]^{TP\lambda}\ar[rd]_{\lambda TP} &  & PTPT\ar[r]^{P\lambda T}\dtwocell[0.5]{ruu}{\omega_{4}T}\dtwocell[0.5]{rdd}{P\omega_{3}} & P^{2}T^{2}\ar[ru]_{\mu T^{2}}\ar[dr]^{P^{2}m} &  & PT\\
 &  & PT^{2}P\ar[ur]_{PT\lambda}\ar@/_{0pc}/[rrd]^{PmP} &  &  & P^{2}T\ar[ru]_{\mu T}\\
 &  & TP^{2}\ar[rr]_{\lambda P}\dtwocell[0.5]{u}{\omega_{3}P} &  & PTP\ar[ru]_{P\lambda}
}
\label{W5}
\end{equation}
is equal to
\[
\xymatrix@=1.5em{ & T^{2}P\ar[rd]^{mP}\ar[r]^{T\lambda} & TPT\ar[r]^{\lambda T} & PT^{2}\ar[rd]^{Pm}\dtwocell[0.5]{ld}{\omega_{3}}\\
T^{2}P^{2}\ar[ur]^{T^{2}\mu}\ar[rd]_{mP^{2}} &  & TP\ar[rr]^{\lambda} &  & PT\\
 & TP^{2}\ar[ur]_{T\mu}\ar[r]_{\lambda P} & PTP\ar[r]_{P\lambda} & P^{2}T\ar[ur]_{\mu T}\dtwocell[0.5]{lu}{\omega_{4}}
}
\]
\end{defn}

\begin{rem}
Since the above axiom \eqref{W5} is the most complicated, it may
be helpful to note that it may be seen as an instance of a pseudomonad
transformation axiom. This is explained by Gambino and Lobbia \cite[Remark 4.2]{GambinoDL}
who denote the axiom $\left(\textnormal{C5}\right)$.
\end{rem}

For convenience and easy reference, we also list the five redundant
coherence conditions of a pseudodistributive law. 
\begin{thm}
Given a pseudodistributive law $\left(\lambda,\omega_{1},\omega_{2},\omega_{3},\omega_{4}\right)\colon TP\to PT$
in pseudomonoidal form, the following five conditions are derivable.
\begin{equation}
\tag{{W6}}\xymatrix@=1.5em{TP\ar[rr]^{uTP}\ar[d]_{\lambda}\ar@/^{1.7pc}/[rrr]^{\textnormal{id}} &  & T^{2}P\ar[r]^{mP}\ar[d]_{T\lambda} & TP\ar[dd]^{\lambda}\\
PT\ar[rr]^{uPT}\ar[d]_{\textnormal{id}}\urtwocell[0.55]{drr}{\omega_{1}T} &  & TPT\urtwocell[0.7]{r}{\omega_{3}}\ar[d]_{\lambda T} & \; & \ar@{}[]|-{=} & \textnormal{id}_{\lambda}\\
PT\ar[rr]_{PuT}\ar@/_{1.7pc}/[rrr]_{\textnormal{id}} & \; & PT^{2}\ar[r]_{Pm} & PT
}
\label{W6}
\end{equation}
\begin{equation}
\tag{{W7}}\xymatrix@=1.5em{TP\ar[rr]^{TP\eta}\ar[d]_{\lambda}\ar@/^{1.7pc}/[rrr]^{\textnormal{id}} &  & TP^{2}\ar[r]^{T\mu}\ar[d]_{\lambda P} & TP\ar[dd]^{\lambda}\\
PT\ar[rr]^{PT\eta}\ar[d]_{\textnormal{id}}\dltwocell[0.45]{drr}{P\omega_{2}} & \; & PTP\ar[d]_{P\lambda}\dltwocell[0.5]{r}{\omega_{4}} & \; & \ar@{}[]|-{=} & \textnormal{id}_{\lambda}\\
PT\ar[rr]_{P\eta T}\ar@/_{1.7pc}/[rrr]_{\textnormal{id}} &  & P^{2}T\ar[r]_{\mu T} & PT
}
\label{W7}
\end{equation}
\begin{equation}
\tag{{W8}}\xymatrix@=1.5em{ & T^{2}P\ar[r]^{T\lambda}\ar[rd]^{mP} & TPT\ar[r]^{\lambda T} & PT^{2}\ar[rd]^{Pm} &  &  & T^{2}P\ar[r]^{T\lambda} & TPT\ar[r]^{\lambda T} & PT^{2}\ar[rd]^{Pm}\\
T^{2}\ar[ur]^{T^{2}\eta}\ar[rd]_{m} &  & TP\ar[rr]^{\lambda}\dtwocell[0.45]{dr}{\omega_{2}}\dtwocell[0.45]{ru}{\omega_{3}} &  & PT\ar@{}[r]|-{=} & T^{2}\ar[ur]^{T^{2}\eta}\ar[rd]_{m}\ar@/_{0.7pc}/[urr]_{T\eta T}\ar@/_{1.5pc}/[rurr]_{\eta T^{2}} & \dtwocell[0.6]{u}{T\omega_{2}} & \dtwocell[0.6]{u}{\omega_{2}T} &  & PT\\
 & T\ar@/_{1pc}/[rrru]_{\eta T}\ar[ur]_{T\eta} & \; & \; &  &  & T\ar@/_{1pc}/[rrru]_{\eta T} & \; & \;
}
\label{W8}
\end{equation}
\begin{equation}
\tag{{W9}}\xymatrix@=1.5em{ & P\ar@/^{1pc}/[rrrd]^{Pu}\ar[rd]^{uP} & \; & \; &  &  & P\ar@/^{1pc}/[rrrd]^{Pu} & \; & \;\\
P^{2}\ar[ru]^{\mu}\ar[rd]_{uP^{2}} &  & TP\ar[rr]^{\lambda}\dtwocell[0.45]{dr}{\omega_{4}}\dtwocell[0.45]{ur}{\omega_{1}} &  & PT\ar@{}[r]|-{=} & P^{2}\ar[ru]^{\mu}\ar[rd]_{uP^{2}}\ar@/^{0.7pc}/[rrd]^{PuP}\ar@/^{1.5pc}/[rrrd]^{P^{2}u} & \dtwocell[0.6]{d}{\omega_{1}P} & \dtwocell[0.6]{d}{P\omega_{1}} &  & PT\\
 & TP^{2}\ar[r]_{\lambda P}\ar[ur]_{T\mu} & PTP\ar[r]_{P\lambda} & P^{2}T\ar[ur]_{\mu T} &  &  & TP^{2}\ar[r]_{\lambda P} & PTP\ar[r]_{P\lambda} & P^{2}T\ar[ur]_{\mu T}
}
\label{W9}
\end{equation}
\begin{equation}
\tag{{W10}}\xymatrix{ & P\ar[rd]^{uP}\ar@/^{0.8pc}/[rrd]^{Pu} & \; &  &  &  & P\ar[rd]^{Pu}\\
1\ar[rd]_{u}\ar[ru]^{\eta} &  & TP\ar[r]^{\lambda}\dtwocell[0.4]{d}{\omega_{2}}\dtwocell[0.4]{u}{\omega_{1}} & PT & \ar@{}[]|-{=} & 1\ar[rd]_{u}\ar[ur]^{\eta} &  & PT\\
 & T\ar[ru]_{T\eta}\ar@/_{0.8pc}/[rru]_{\eta T} & \; &  &  &  & T\ar[ru]_{\eta T}
}
\label{W10}
\end{equation}
\end{thm}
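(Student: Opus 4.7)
\smallskip

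\textbf{Proof plan.} My plan is to derive (W6)--(W10) by passing through the Kleisli-decagon and algebra presentations of pseudodistributive law introduced in the outline, leveraging the equivalence (to be established in Section \ref{2dproof}) between pseudodistributive laws in pseudomonoidal form satisfying (W1)--(W5) and extensions of the pseudomonad $(T,u,m)$ to a pseudomonad $\widetilde{T}$ on the Kleisli bicategory $\mathbf{Kl}(P,\eta,\mu)$ presented in the pseudoextensive form of Definition \ref{pseudomonadextensive}. First I would verify that (W1)--(W5) suffice to assemble the structure modifications $\widetilde{\phi}, \widetilde{\theta}, \widetilde{\delta}$ of $\widetilde{T}$ and check the two coherence conditions of Definition \ref{pseudomonadextensive}: here (W3) and (W4) combine with (W5) to produce the decagon associativity modification of $\widetilde{T}$, while (W1) and (W2) supply the unit data, along the lines sketched in the one-dimensional case.

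Having obtained $\widetilde{T}$ as a pseudoextensive pseudomonad, Remark \ref{redundantremark} then applies automatically, giving three identities among $\widetilde{\phi},\widetilde{\theta},\widetilde{\delta}$ that hold without further assumption. The main technical step is to translate these three automatic identities back into identities on the original data $(\lambda,\omega_{1},\omega_{2},\omega_{3},\omega_{4})$ by unpacking the way $\widetilde{(-)}^{T}$ composes with the pseudomonad structure $m$ of $T$. I expect the identity $\widetilde{\phi}_{u_{X}}=\widetilde{\theta}_{X}u_{X}$ to translate into (W10), as it concerns the two unit modifications $\omega_{1}$ and $\omega_{2}$, while the remaining two identities $\widetilde{\delta}_{u_{Y},f}=\widetilde{\theta}_{Y}f^{T}\cdot \widetilde{\phi}_{u_{Y}^{T}f}$ and $\widetilde{\phi}_{g^{T}f}=\widetilde{\delta}_{g,f}u_{X}\cdot g^{T}\widetilde{\phi}_{f}$ translate into (W8) and (W9) respectively, since these package the interaction of a $T$-unit with a $P$-unit through $m$.

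The remaining two redundancies (W6) and (W7) are the unit axioms for the pseudomonad morphism and opmorphism structures that $\lambda$ carries; these are known to follow from the associativity axioms (W3) and (W4) together with coherence in the source and target pseudomonads, by the argument of Marmolejo and Wood \cite{marm2008}, which I would simply invoke (or re-derive as an instance of the Kelly-style redundancy applied to the relevant composites).

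The principal obstacle will be the explicit bookkeeping required for the three translations in the middle step, since $\widetilde{(-)}^{T}$ applied to a morphism $f\colon X\to PTY$ in $\mathbf{Kl}(P)$ unfolds into the composite $\lambda T\cdot Pm\cdot PT\lambda\cdot PTf$ (up to coherence), and so each identity in $\widetilde{T}$ produces a diagram with several pasted copies of $\omega_{i}$ and of the pseudomonad coherence modifications of $T$ and $P$. Reducing each such pasting to exactly one of (W8), (W9), (W10) is delicate but mechanical, relying only on naturality of the $\omega_{i}$ and the already-established (W1)--(W5).
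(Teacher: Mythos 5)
Your proposal follows essentially the same route as the paper: pass to the pseudoextensive (Kleisli) presentation, observe that the three Kelly-style redundant axioms of Remark \ref{redundantremark} hold automatically for the induced pseudomonad $\widetilde{T}$, translate them back (via the decagon of Lemma \ref{ombij}) into \eqref{W10}, \eqref{W8} and \eqref{W9}, and invoke Marmolejo--Wood for \eqref{W6} and \eqref{W7}. The only slips are cosmetic: the extension of $f\colon X\to PTY$ is $PmY\cdot\lambda TY\cdot Tf$ rather than the composite you wrote, and the redundant identities are that certain composites of $\widetilde{\phi},\widetilde{\theta},\widetilde{\delta}$ equal identities or single cells rather than the equalities as you stated them, but neither affects the argument.
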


\begin{rem}
The redundancy of \eqref{W6} and \eqref{W7} is shown directly by
Marmolejo and Wood \cite{marm2008}. However, this result can be seen
more easily by noting that pseudomonad morphisms can be seen as instances
of pseudoalgebras (as is well known in one dimension \cite{marm2002,noiteration1d}),
and that one of the unitality axioms for a pseudoalgebra is redundant
\cite[Lemma 9.1]{marm1997}. Curiously the methods of this paper give
another proof of the redundancy, though this proof would be less strong
as it uses additional pseudodistributive law axioms.
\end{rem}

\begin{rem}
Note that this is the best that one might hope for, in that only one
compatibly axiom is needed relating the pseudomonad morphism and opmorphism
data. This is why the set of five axioms given here is expected to
be minimal. Stated more precisely, this becomes the following result.
\end{rem}

\begin{thm}
\label{onlyW5} A pseudodistributive law $\left(\lambda,\omega_{1},\omega_{2},\omega_{3},\omega_{4}\right)\colon TP\to PT$
is equivalently a pseudomonad morphism $\left(\lambda,\omega_{1},\omega_{3}\right)\colon T\to T$
along $P$, and a pseudomonad opmorphism $\left(\lambda,\omega_{2},\omega_{4}\right)\colon P\to P$
along $T$, such that $\omega_{3}$ and $\omega_{4}$ satisfy axiom
(W5).
\end{thm}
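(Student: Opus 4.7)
The plan is essentially to observe that this result is obtained by unpacking the usual definitions of pseudomonad morphism and pseudomonad opmorphism and matching them against the labelled axioms (W1)--(W5). Concretely, a pseudomonad morphism $(\lambda,\omega_1,\omega_3)\colon T\to T$ along a pseudofunctor $P$ consists of a 1-cell $\lambda\colon TP\to PT$, together with an invertible modification $\omega_1$ witnessing compatibility with the unit $u$ of $T$, and an invertible modification $\omega_3$ witnessing compatibility with the multiplication $m$ of $T$; the two coherence axioms are precisely the unitary square and the associativity hexagon relating $\omega_1,\omega_3$ to $\alpha,\beta,\gamma$ of the pseudomonad $T$. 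By inspection, these are exactly (W1) and (W3). Dually, a pseudomonad opmorphism $(\lambda,\omega_2,\omega_4)\colon P\to P$ along $T$ furnishes invertible $\omega_2,\omega_4$ with respect to the unit $\eta$ and multiplication $\mu$ of $P$, and its two axioms are exactly (W2) and (W4).

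Starting from a pseudodistributive law in the sense of the definition given, I would first project out the sub-data $(\lambda,\omega_1,\omega_3)$ and verify it satisfies the unitary axiom (W1) and associativity axiom (W3); this is immediate since these are literally two of the five axioms imposed. Analogously, the sub-data $(\lambda,\omega_2,\omega_4)$ gives a pseudomonad opmorphism via (W2) and (W4). The fifth axiom (W5) is then precisely the requested compatibility between the morphism data $\omega_3$ and the opmorphism data $\omega_4$.

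Conversely, given a pseudomonad morphism $(\lambda,\omega_1,\omega_3)$ along $P$ and a pseudomonad opmorphism $(\lambda,\omega_2,\omega_4)$ along $T$ sharing the same 1-cell $\lambda$ and satisfying (W5), I would simply assemble these four modifications as the pseudodistributive law data; axioms (W1)--(W4) hold by the defining axioms for (op)morphisms of pseudomonads, and (W5) is given by hypothesis. Both assignments are strictly inverse to each other on the nose since no data or equations are being modified, only regrouped.

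The main (mild) obstacle is bookkeeping: ensuring the directions of the modifications as chosen in the definition truly match the conventions for pseudomonad morphism and opmorphism used elsewhere (e.g.\ in \cite{marm1999,marm2008,GambinoDL}), and that the ``unitary/associativity'' axioms there correspond literally, not just up to a rearrangement of coherence cells, to the pasting diagrams of (W1)--(W4). Since the paper states in its definition that the directions of $\omega_1,\omega_2,\omega_3,\omega_4$ were chosen precisely to make these identifications natural, this reduces to a direct comparison of diagrams and requires no real calculation.
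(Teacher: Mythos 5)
Your proposal is correct and is essentially the paper's own (implicit) argument: the theorem is presented as an immediate regrouping of the five axioms, with \eqref{W1} and \eqref{W3} being the unit and associativity axioms of a pseudomonad morphism, \eqref{W2} and \eqref{W4} those of a pseudomonad opmorphism, and \eqref{W5} the single compatibility condition, so both directions are identities on the data. The only point worth flagging is that if one's preferred definition of pseudomonad (op)morphism includes the second unit axiom --- here \eqref{W6}, respectively \eqref{W7} --- then the forward direction additionally requires the redundancy of that axiom (due to Marmolejo and Wood, and discussed in Subsection \ref{redundant}), a convention the paper builds into its two-axiom presentation.
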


\subsection{Decagon definition of pseudodistributive laws}

The following is the definition of pseudodistributive law one finds
after working out the conditions on a pseudonatural transformation
$\lambda\colon TP\to PT$ needed for extending a pseudomonad $\left(T,u,m\right)$
to the Kleisli bicategory of a pseudomonad $\left(P,\eta,\mu\right)$
in pseudoextensive form. In practice one would likely not use this
definition, but it will be needed for the later proofs and explanation
of redundant coherence axioms.
\begin{defn}
A \emph{pseudodistributive law (in Kleisli-decagon form)} of pseudomonads
 $\left(T,u,m\right)$ over $\left(P,\eta,\mu\right)$ is a pseudonatural
transformation $\lambda\colon TP\to PT$ and three invertible modifications
comprising the two triangles
\[
\xymatrix@=1.5em{TP\ar[rr]^{\lambda} & \; & PT &  &  & TP\ar[rr]^{\lambda} & \; & PT\\
 & P\ar[ru]_{Pu}\ar[lu]^{uP}\ultwocell[0.7]{u}{\omega_{1}} &  &  &  &  & T\ar[ru]_{\eta T}\ar[lu]^{T\eta}\drtwocell[0.6]{u}{\omega_{2}}
}
\]
and the decagon
\[
\xymatrix@=1em{TPTPT\ar[rr]^{TP\lambda T}\ar[d]_{\lambda TPT} &  & \dtwocell[0.5]{rrdd}{\Omega}TP^{2}T^{2}\ar[rr]^{TP^{2}m} &  & TP^{2}T\ar[rr]^{T\mu T} &  & TPT\ar[d]^{\lambda T}\\
PT^{2}PT\ar[d]_{PmPT} &  &  &  &  &  & PT^{2}\ar[d]^{Pm}\\
PTPT\ar[rr]_{P\lambda T} &  & P^{2}T^{2}\ar[rr]_{P^{2}m} &  & P^{2}T\ar[rr]_{\mu T} &  & PT
}
\]
satisfying the triangle equation;
\begin{equation}
\resizebox{0.85\textwidth}{!}{\tag{{D1}}\xymatrix{ &  & TPT\ar[rd]^{T\eta PT}\ar@/^{1pc}/[rrd]^{\textnormal{id}}\\
TPT\ar[d]|-{TuPT}\ar@/^{1pc}/[rru]^{\textnormal{id}}\ar@/_{2pc}/[ddd]_{\textnormal{id}}\ar[r]^{TPuT}\dtwocell[0.25]{dr}{T\omega_{1}T} & TPT^{2}\ar[r]^{T\eta PT^{2}}\ar[ru]^{TPm} & TP^{2}T^{2}\ar[r]^{TP^{2}m} & TP^{2}T\ar[r]^{T\mu T} & TPT\ar[dd]^{\lambda T}\\
T^{2}PT\myar{T\eta TPT}{r}\ar[d]|-{\textnormal{id}}\dtwocell[0.4]{dr}{\omega_{2}TPT}\ar@/_{0pc}/[ur]_{T\lambda T} & TPTPT\ar[d]^{\lambda TPT}\ar@/_{0pc}/[ur]_{TP\lambda T} & \dtwocell[0.45]{rdd}{\Omega}\\
T^{2}PT\ar@/_{0pc}/[r]_{\eta T^{2}PT}\ar[d]|-{mPT} & PT^{2}PT\ar[d]^{PmPT} &  &  & PT^{2}\ar[d]^{Pm}\ar@{}[r]|-{=} & \textnormal{id}_{Pm\cdot\lambda T}\\
TPT\ar[r]_{\eta TPT}\ar@/_{0.5pc}/[rd]_{\lambda T} & PTPT\ar[r]_{P\lambda T} & P^{2}T^{2}\ar[r]_{P^{2}m} & P^{2}T\ar[r]_{\mu T} & PT\\
 & PT^{2}\ar[ru]_{\eta PT^{2}}\ar[r]_{Pm} & PT\ar[ur]_{\eta PT}\ar@/_{1pc}/[rru]_{\textnormal{id}}
}
}\label{D1}
\end{equation}
the pentagon equation;
\begin{equation}
\resizebox{0.85\textwidth}{!}{\tag{{D2}}\xymatrix{TPTPTPT\ar[r]^{\underset{\;}{TPTP\lambda T}}\ar[d]_{\lambda TPTPT} & TPTP^{2}T^{2}\ar[d]|-{\lambda TP^{2}T^{2}}\ar[r]^{\underset{\;}{TPTP^{2}m}} & TPTP^{2}T\ar[d]|-{\lambda TP^{2}T}\ar[r]^{\underset{\;}{TPT\mu T}} & TPTPT\ar[d]^{\lambda TPT}\ar[rdr]^{\underset{\;}{TP\lambda T}}\\
PT^{2}PTPT\ar[d]_{PmPTPT}\ar[r]^{\underset{\;}{P^{2}TP\lambda T}} & PT^{2}P^{2}T^{2}\ar[d]^{PmP^{2}T^{2}}\ar[r]^{\underset{\;}{PT^{2}P^{2}m}} & PT^{2}P^{2}T\ar[d]^{PmP^{2}T}\ar[r]^{\underset{\;}{PT^{2}\mu T}} & PT^{2}PT\ar[d]^{PmPT} &  & TP^{2}T^{2}\ar[d]^{TP^{2}m}\\
PTPTPT\ar[r]^{PTP\lambda T}\ar[d]_{P\lambda TPT}\dtwocell[0.5]{ddrrr}{P\Omega} & PTP^{2}T^{2}\ar[r]^{PTP^{2}m} & PTP^{2}T\ar[r]^{PT\mu T} & PTPT\ar[d]^{P\lambda T} &  & TP^{2}T\ar[d]^{T\mu T}\\
P^{2}T^{2}PT\ar[d]_{P^{2}mPT} &  &  & P^{2}T^{2}\ar[d]^{P^{2}m}\dtwocell[0.5]{rr}{\Omega} &  & TPT\ar[d]^{\lambda T}\\
P^{2}TPT\ar[r]_{P^{2}\lambda T}\ar[rd]_{\mu TPT} & P^{3}T^{2}\ar[r]_{P^{3}m}\ar[rd]_{\mu PT^{2}} & P^{3}T\ar[r]_{P\mu T}\ar[rd]_{\mu PT} & P^{2}T\ar[rdr]_{\mu T} &  & PT^{2}\ar[d]^{Pm}\\
 & PTPT\ar[r]_{P\lambda T} & P^{2}T^{2}\ar[r]_{P^{2}m} & P^{2}T\ar[rr]_{\mu T} &  & PT
}
}\label{D2a}
\end{equation}
is equal to
\[
\resizebox{0.85\textwidth}{!}{\xymatrix{TPTPTPT\ar[rd]^{TP\lambda TPT}\ar[r]^{\underset{\;}{TPTP\lambda T}}\ar[d]_{\lambda TPTPT} & TPTP^{2}T^{2}\ar[r]^{\underset{\;}{TPTP^{2}m}} & TPTP^{2}T\ar[r]^{\underset{\;}{TPT\mu T}} & TPTPT\ar[dr]^{TP\lambda T}\\
PT^{2}PTPT\ar[d]_{PmPTPT} & TP^{2}T^{2}PT\ar[rd]^{TP^{2}mPT}\dtwocell[0.5]{rrr}{TP\Omega}\dtwocell[0.5]{ddd}{\Omega PT} &  &  & TP^{2}T^{2}\ar[rd]^{TP^{2}m}\\
PTPTPT\ar[d]_{P\lambda TPT} &  & TP^{2}TPT\ar[r]^{TP^{2}\lambda T}\ar[d]_{T\mu TPT} & TP^{3}T^{2}\ar[r]^{TP^{3}m}\ar[d]_{T\mu PT^{2}} & TP^{3}T\ar[r]^{TP\mu T}\ar[d]^{T\mu PT} & TP^{2}T\ar[d]^{T\mu T}\\
P^{2}T^{2}PT\ar[rd]_{P^{2}mPT} &  & TPTPT\ar[r]_{TP\lambda T}\ar[d]_{\lambda TPT}\dtwocell[0.5]{rrrdd}{\Omega} & TP^{2}T^{2}\ar[r]_{TP^{2}m} & TP^{2}T\ar[r]_{T\mu T} & TPT\ar[d]^{\lambda T}\\
 & P^{2}TPT\ar[dr]_{\mu TPT} & PT^{2}PT\ar[d]_{PmPT} &  &  & PT^{2}\ar[d]^{Pm}\\
 &  & PTPT\ar[r]_{P\lambda T} & P^{2}T^{2}\ar[r]_{P^{2}m} & P^{2}T\ar[r]_{\mu T} & PT
}
}
\]
and compatibility of $\Omega$ with $\mathscr{C}\to\mathscr{C}_{P}$;
\begin{equation}
\tag{{DC2}}\resizebox{0.85\textwidth}{!}{\xymatrix@=1.5em{TPT^{2}\ar[rr]^{TPT\eta T}\ar[d]^{\lambda T^{2}} &  & TPTPT\ar[rr]^{TP\lambda T}\ar[d]^{\lambda TPT} &  & \dtwocell[0.5]{rrdd}{\Omega}TP^{2}T^{2}\ar[rr]^{TP^{2}m} &  & TP^{2}T\ar[rr]^{T\mu T} &  & TPT\ar[d]^{\lambda T}\\
PT^{3}\ar[rr]^{PT^{2}\eta T}\ar[d]^{PmT} &  & PT^{2}PT\ar[d]^{PmPT} &  &  &  &  &  & PT^{2}\ar[d]^{Pm}\\
PT^{2}\ar[rr]_{PT\eta T}\ar@/_{1.5pc}/[rrrr]|-{P\eta T^{2}}\ar@/_{0.3pc}/[rrd]_{Pm} &  & PTPT\ar[rr]_{P\lambda T}\dtwocell[0.4]{d}{P\omega_{2}T} &  & P^{2}T^{2}\ar[rr]^{P^{2}m} &  & P^{2}T\ar[rr]^{\mu T} &  & PT\\
 &  & PT\ar[urrrr]_{P\eta T}\ar@/_{0.5pc}/[urrrrrr]_{\textnormal{id}}
}
}\label{DC2}
\end{equation}
is equal to
\[
\resizebox{0.85\textwidth}{!}{\xymatrix@=1.5em{TPT^{2}\ar[rr]^{TPT\eta T}\ar@/_{1.5pc}/[rrrr]|-{TP\eta T^{2}}\ar@/_{1.7pc}/[rrrrrdr]_{\textnormal{id}} &  & TPTPT\ar[rr]^{TP\lambda T}\dtwocell[0.4]{d}{TP\omega_{2}T} &  & TP^{2}T^{2}\ar[rr]^{TP^{2}m}\ar[rrd]_{T\mu T^{2}} &  & TP^{2}T\ar[rrrr]^{T\mu T} &  &  &  & TPT\ar[d]^{\lambda T}\\
 &  & \; &  &  &  & TPT^{2}\ar[rrrru]^{TPm}\ar[rr]_{\lambda T^{2}} &  & PT^{3}\ar[rr]_{PTm}\ar[d]_{PmT} &  & PT^{2}\ar[d]^{Pm}\\
 &  &  &  &  &  &  &  & PT^{2}\ar[rr]_{Pm} &  & PT\\
\\
}
}
\]
\end{defn}

\begin{rem}
When the data for enforcing compatibility is not part of the data
for forming the extension, the compatibility axioms are necessary.
This is why all of the later presentations will require both compatibility
axioms. 

Whilst this suggests axiom \eqref{DC2} might be redundant (because
the compatibility data $\omega_{2}$ is part of the extension data)\footnote{Another reason to expect \eqref{DC2} to be redundant is that the
other compatibility axiom is.}, the fact it does not involve the unit $u$ (whereas \eqref{D1}
does) suggests it may be necessary. The issue is that MacLane and
Par\'e's coherence theorem applies to $\omega_{1}$, $\Omega$ and
\[
\xymatrix@=1.5em{ &  &  &  & TPT\ar[rrd]^{\lambda T}\dtwocell[0.55]{d}{\omega_{2}T}\\
T\ar[rr]_{Tu} &  & T^{2}\ar[urr]^{T\eta T}\ar[rrrr]_{\eta T^{2}} & \; & \; &  & PT^{2}\ar[rr]_{Pm} &  & PT
}
\]
instead of $\omega_{2}$ (even though this simplifies to $\omega_{2}$
when composed with appropriate pseudomonad data)\footnote{Note this data directly appears in axiom \eqref{D1}}.
Hence the coherence theorem only applies to pasting diagrams constructed
in a certain way. This problem creates a persistent issue where the
redundancy of certain coherence conditions only becomes apparent when
restricted by a unit map $u$.
\end{rem}

\subsection{Pseudoalgebra definition of pseudodistributive laws}

The following is intended to provide a definition of pseudodistributive
laws involving a pseudoalgebra structure map $\alpha$ (with the pseudoalgebra
data and axioms being derivable from the law). This is in the spirit
of Marmolejo, Rosebrugh and Wood \cite[Prop. 3.5]{marm2002} who in
one dimension considered taking algebra structure maps as the data
for a distributive law, though directly assuming the algebra axioms.

We point out that as the usual monoidal definition of monads is natural,
defining a distributive law in monoidal form requires no extra ``change
of variables'' axioms. If the reader agrees the no-iteration definition
of monad is also natural, then the no-iteration (and by extension
pseudoalgebra) versions of a distributive law should also avoid any
such axioms. This is the conceptual reason as to why a simplification
of \cite[Prop. 3.5]{marm2002} is possible, reducing five axioms to
three in dimension one.\footnote{In the skew setting the monoidal and no-iteration definitions of skew
monad are distinct \cite{mndwarp} and so the reader is forced to
decide which one is natural. The same issue applies to distributive
laws in the skew setting (as the bijectivity of the change of variables
$\lambda\mapsto\alpha$ fails), forcing the reader to decide which
formulation is the natural one.}
\begin{defn}
A \emph{pseudodistributive law (in pseudoalgebra form)} of pseudomonads
 $\left(T,u,m\right)$ over $\left(P,\eta,\mu\right)$ is a pseudonatural
transformation $\alpha\colon TPT\to PT$ and three invertible modifications
\[
\xymatrix@=1em{TPT\myar{\alpha}{r}\ultwocell[0.45]{dr}{\psi} & PT & T^{2}\myar{T\eta T}{rr}\ar[dd]_{m}\dltwocell[0.5]{drdr}{\xi} &  & TPT\ar[dd]^{\alpha} & TPTPT\ar[rr]^{TP\alpha}\ar[dd]_{\alpha PT}\dtwocell[0.5]{drdrrr}{\Psi} &  & TP^{2}T\ar[rr]^{T\mu T} &  & TPT\ar[dd]^{\alpha}\\
 & \;\\
PT\ar[uu]^{uPT}\ar[ruu]_{\textnormal{id}} &  & T\myard{\eta T}{rr} &  & PT & PTPT\ar[rr]_{P\alpha} &  & P^{2}T\ar[rr]_{\mu T} &  & PT
}
\]
satisfying the triangle equation;
\begin{equation}
\resizebox{0.95\textwidth}{!}{\tag{{M1}}\xymatrix{TPT\ar[d]^{TuPT}\ar@/_{1.5pc}/[dd]_{\textnormal{id}}\ar@/^{2.5pc}/[rrrrrrr]^{\textnormal{id}}\ar@/^{0pc}/[rr]^{\textnormal{id}}\dtwocell[0.35]{rrd}{T\psi} &  & TPT\ar[rrr]^{T\eta PT} &  &  & TP^{2}T\ar[rr]^{T\mu T}\dtwocell[0.5]{dd}{\Psi} &  & TPT\ar[dd]^{\alpha}\\
T^{2}PT\ar[rrr]^{T\eta TPT}\ar[d]^{mPT}\dtwocell[0.45]{rrrd}{\xi PT}\ar@/_{0.5pc}/[rur]_{T\alpha} & \; & \; & TPTPT\ar[d]_{\alpha PT}\ar@/_{0.5pc}/[rur]_{TP\alpha} & \; &  &  &  & \ar@{}[]|-{=} & \textnormal{id}_{\alpha}\\
TPT\ar[rrr]_{\eta TPT}\ar@/_{0.6pc}/[rrrd]_{\alpha} &  &  & PTPT\ar[rr]_{P\alpha} &  & P^{2}T\ar[rr]_{\mu T} &  & PT\\
 &  &  & PT\ar[rru]_{\eta PT}\ar@/_{0.8pc}/[rrrru]_{\textnormal{id}}
}
}\label{M1}
\end{equation}
the pentagon equation;
\begin{equation}
\tag{{M2}}\resizebox{0.95\textwidth}{!}{\xymatrix@=1.5em{TPTPTPT\ar[dd]_{\alpha PTPT}\ar[rr]^{TPTP\alpha} &  & TPTP^{2}T\ar[rr]^{TPT\mu T}\ar[dd]_{\alpha P^{2}T} &  & TPTPT\ar[rrd]^{TP\alpha}\ar[dd]_{\alpha PT}\\
 &  &  &  &  &  & TP^{2}T\ar[rrd]^{T\mu T}\dtwocell[0.5]{dd}{\Psi}\\
PTPTPT\ar[rrd]_{P\alpha PT}\ar[rr]^{PTP\alpha} &  & PTP^{2}T\ar[rr]^{PT\mu T}\dtwocell[0.5]{rrd}{P\Psi} &  & PTPT\ar[rrd]^{P\alpha} &  &  &  & TPT\ar[dd]^{\alpha}\\
 &  & P^{2}TPT\ar[rrd]_{\mu TPT}\ar[rr]_{P^{2}\alpha} &  & P^{3}T\ar[rr]_{P\mu T}\ar[rrd]_{\mu PT} &  & P^{2}T\ar[rrd]^{\mu T}\\
 &  &  &  & PTPT\ar[rr]_{P\alpha} &  & P^{2}T\ar[rr]_{\mu T} &  & PT
}
}\label{M2}
\end{equation}
is equal to
\[
\resizebox{0.95\textwidth}{!}{\xymatrix@=1.5em{TPTPTPT\ar[dd]_{\alpha PTPT}\ar[rr]^{TPTP\alpha}\ar[rrd]_{TP\alpha PT} &  & TPTP^{2}T\ar[rr]^{TPT\mu T} &  & TPTPT\ar[rrd]^{TP\alpha}\dtwocell[0.5]{ld}{TP\Psi}\\
 &  & TP^{2}TPT\ar[rrd]_{T\mu TPT}\dtwocell[0.5]{dd}{\Psi PT}\ar[rr]^{TP^{2}\alpha} & \; & TP^{3}T\ar[rr]^{TP\mu T}\ar[rrd]^{T\mu PT} &  & TP^{2}T\ar[rrd]^{T\mu T}\\
PTPTPT\ar[rrd]_{P\alpha PT} &  &  &  & TPTPT\ar[dd]^{\alpha PT}\ar[rr]_{TP\alpha} &  & TP^{2}T\ar[rr]_{T\mu T}\dtwocell[0.5]{dd}{\Psi} &  & TPT\ar[dd]^{\alpha}\\
 &  & P^{2}TPT\ar[rrd]_{\mu TPT}\\
 &  &  &  & PTPT\ar[rr]_{P\alpha} &  & P^{2}T\ar[rr]_{\mu T} &  & PT
}
}
\]
compatibility of $\psi$ with $\mathscr{C}\to\mathscr{C}_{P}$;
\begin{equation}
\tag{{MC1}}\resizebox{0.5\textwidth}{!}{\xymatrix@=1.5em{ &  & T\ar[dd]_{uT}\ar[rr]^{\eta T}\ar@/_{2pc}/[dddd]_{\textnormal{id}} &  & PT\ar[dd]_{uPT}\ar@/^{3pc}/[dddd]^{\textnormal{id}}\\
\\
 &  & T^{2}\myar{T\eta T}{rr}\ar[dd]_{m}\dltwocell[0.5]{drdr}{\xi} &  & TPT\ar[dd]^{\alpha} & \dltwocell[0.4]{l}{\psi} & = & \textnormal{id}_{\eta T}\\
\\
 &  & T\myard{\eta T}{rr} &  & PT
}
}\label{MC1}
\end{equation}
and compatibility of $\Psi$ with $\mathscr{C}\to\mathscr{C}_{P}$;
\begin{equation}
\tag{{MC2}}\resizebox{0.8\textwidth}{!}{\xymatrix@=1.5em{ & T^{3}\ar[rr]^{T\eta T^{2}}\ar[dd]_{mT} &  & TPT^{2}\ar[dd]_{\alpha T}\ar[rr]^{TPT\eta T} &  & TPTPT\ar[rr]^{TP\alpha}\ar[dd]_{\alpha PT}\dtwocell[0.5]{drdrrr}{\Psi} &  & TP^{2}T\ar[rr]^{T\mu T} &  & TPT\ar[dd]^{\alpha}\\
 &  & \dtwocell[0]{d}{\xi T}\\
 & T^{2}\ar[rr]_{\eta T^{2}}\ar[rrd]_{m} &  & PT^{2}\ar[rr]_{PT\eta T}\ar[rrd]_{Pm} &  & PTPT\ar[rr]_{P\alpha}\dtwocell[0.5]{d}{P\xi} &  & P^{2}T\ar[rr]_{\mu T} &  & PT\\
 &  &  & T\ar[rr]_{\eta T} &  & PT\ar[rru]_{P\eta T}\ar@/_{0.5pc}/[rrurr]_{\textnormal{id}}
}
}\label{MC2}
\end{equation}
is equal to
\[
\resizebox{0.8\textwidth}{!}{\xymatrix@=1.5em{ & T^{3}\ar[rr]^{T\eta T^{2}}\ar[rrd]_{Tm}\ar[dd]_{mT} &  & TPT^{2}\ar[rr]^{TPT\eta T}\ar[rrd]_{TPm} &  & TPTPT\ar[rr]^{TP\alpha}\dtwocell[0.5]{d}{TP\xi} &  & TP^{2}T\ar[rr]^{T\mu T} &  & TPT\ar[dd]^{\alpha}\\
 &  &  & T^{2}\ar[rr]_{T\eta T}\ar[d]_{m} &  & TPT\ar[rru]_{TP\eta T}\ar@/_{0.5pc}/[rrurr]_{\textnormal{id}}\dtwocell[0.5]{dr}{\xi}\\
 & T^{2}\ar[rr]_{m} &  & T\ar[rrrrrr]_{\eta T} &  &  & \; &  &  & PT\\
\\
}
}
\]
\end{defn}

\begin{rem}
Most of the technicalities in changing to this formulation arise from
replacing the data $\omega_{2}$ with the new data $\xi$, which now
serves two separate roles. Firstly, the data $\xi$ is what is required
to provide compatibility with the Kleisli functor $\mathscr{C}\to\mathscr{C}_{P}$.
Secondly, it yields the pasting

\[
\xymatrix@=1em{T\ar[rr]^{Tu}\ar@/_{0.5pc}/[rrdd]_{\textnormal{id}} &  & T^{2}\myar{T\eta T}{rr}\ar[dd]_{m}\dltwocell[0.5]{drdr}{\xi} &  & TPT\ar[dd]^{\alpha}\\
\\
 &  & T\myard{\eta T}{rr} &  & PT
}
\]
which is the data required to construct an extension (without enforcing
compatibility). In this way one may view $\xi$ as two pieces of coherence
data combined into one. In fact, asking that these two pieces of data
are related this way is the condition for compatibility of $\xi$
with $\mathscr{C}\to\mathscr{C}_{P}$.
\end{rem}

\begin{rem}
The restriction of \eqref{MC1} along the unit $u\colon1\to T$ and
the restriction of \eqref{MC2} along $T^{2}u\colon T^{2}\to T^{3}$
are both redundant in that they follow from the pair of axioms \eqref{M1}
and \eqref{M2}\footnote{The former is straightforward to check and the latter is a pseudoalgebra
version of \eqref{D4}.}.
\end{rem}

\subsection{No-iteration definition of pseudodistributive laws}

The following defines a pseudodistributive law in terms of pseudo-pasting
operators $\left(-\right)_{X,Y}^{\lambda}$, which in our case arise
as families of functors $\mathscr{C}\left(X,PTY\right)\to\mathscr{C}\left(TX,PTY\right)$
induced by pasting with a 2-cell $\alpha\colon TPT\Rightarrow PT$
as in the diagram
\[
\xymatrix@=1em{ &  &  &  & \mathscr{C}\ar[rrdd]^{T}\\
 &  & \dtwocell[0.2]{ld}{f} &  & \dtwocell[0.2]{d}{\alpha}\\
\mathscr{B}\ar@/^{0.7pc}/[ururrr]^{X}\ar[rr]_{Y} & \; & \mathscr{C}\ar[urur]^{PT}\ar[rrrr]_{PT} &  & \; &  & \mathscr{C}.
}
\]
This definition (which generalizes \cite[Definition 2.1]{noiteration1d}
to two dimensions) should be useful in that it more readily generalizes
to the relative case \cite{relative} (in which one may no longer
have such a 2-cell $\alpha$).
\begin{defn}
\label{pasting} A \emph{pseudo-pasting operator} in a tricategory
$\mathscr{K}$
\[
\left(-\right)^{\#}\colon\mathscr{K}\left(C,D\right)\left(-,s-\right)\to\mathscr{K}\left(C,E\right)\left(t-,u-\right)
\]
is a family of functors indexed by $f\colon C\to D$ and $g\colon C\to\textnormal{dom}s=\textnormal{dom}u$
\[
\left(-\right)_{f,g}^{\#}\colon\mathscr{K}\left(C,D\right)\left(f,sg\right)\to\mathscr{K}\left(C,E\right)\left(tf,ug\right)
\]
equipped with:
\begin{enumerate}
\item (whiskering data) for all $\vartheta\colon f\Rightarrow sg\colon C\to D$
and $h\colon A\to C$ equalities $\vartheta^{\#}h=\left(\vartheta h\right)^{\#}$\footnote{We use an equality here since it holds strictly in the cases of interest.
In particular for a pseudonatural transformation $\alpha$ and whiskering
by a pseudofunctor $F$ we have $\left(\alpha F\right)_{X}:=\alpha_{FX}$.};
\item (blistering data) for all $\vartheta\colon f\Rightarrow sg\colon C\to D$
and $\xi\colon p\Rightarrow f\colon C\to D$ a 3-isomorphism $\textnormal{bl}_{\#}\left(\vartheta,\xi\right)\colon\vartheta^{\#}\left(t\xi\right)\cong\left(\vartheta\xi\right)^{\#}$;
\end{enumerate}
such that using $\circ$ for vertical composition and $\cdot$ for
horizontal whiskering:
\begin{enumerate}
\item (respecting naturality) all $\nu\colon\vartheta\Rrightarrow\vartheta'\colon f\Rightarrow sg\colon C\to D$
render commutative
\[
\xymatrix@=1em{\vartheta^{\#}\circ\left(t\cdot\xi\right)\ar[d]_{\nu^{\#}\circ\left(t\cdot\xi\right)}\ar[rrrr]^{\textnormal{bl}_{\#}\left(\vartheta,\xi\right)} &  &  &  & \left(\vartheta\circ\xi\right)^{\#}\ar[d]^{\left(\nu\circ\xi\right)^{\#}}\\
\vartheta'^{\#}\circ\left(t\cdot\xi\right)\myard{\textnormal{bl}_{\#}\left(\vartheta',\xi\right)}{rrrr} &  &  &  & \left(\vartheta'\circ\xi\right)^{\#}
}
\]
\item (respecting whiskering) all such $\vartheta$, $\xi$ and $h$ render
commutative
\[
\xymatrix@=1em{\left(\vartheta^{\#}\circ\left(t\cdot\xi\right)\right)\cdot h\ar[rrrrrr]^{\textnormal{bl}_{\#}\left(\vartheta,\xi\right)\cdot h}\ar[d]_{\textnormal{mid four int }} &  &  &  &  &  & \left(\vartheta\circ\xi\right)^{\#}\cdot h\ar@{=}[dd]\\
\left(\vartheta^{\#}\cdot h\right)\circ\left(t\cdot\left(\xi\cdot h\right)\right)\ar@{=}[d]\\
\left(\vartheta\cdot h\right)^{\#}\circ\left(t\cdot\left(\xi\cdot h\right)\right)\myard{\textnormal{bl}_{\#}\left(\vartheta\cdot h,\xi\cdot h\right)}{rrr} &  &  & \left(\vartheta\circ\left(\xi\cdot h\right)\right)^{\#}\ar[rrr]_{\textnormal{mid four int}^{\#}} &  &  & \left(\left(\vartheta\circ\xi\right)\cdot h\right)^{\#}
}
\]
\item (respecting blistering) all $\psi\colon q\Rightarrow p\colon C\to D$
render commutative
\[
\xymatrix@=1em{\vartheta^{\#}\circ\left(t\cdot\left(\xi\circ\psi\right)\right)\ar[rrrrrr]^{\textnormal{bl}_{\#}\left(\vartheta,\xi\circ\psi\right)}\ar[d]_{\textnormal{mid four int }} &  &  &  &  &  & \left(\vartheta\circ\left(\xi\circ\psi\right)\right)^{\#}\ar[d]^{\textnormal{ assoc}_{\;}^{\#}}\\
\vartheta^{\#}\circ\left(t\cdot\xi\right)\circ\left(t\cdot\psi\right)\ar[rrr]\myard{\textnormal{bl}_{\#}\left(\vartheta,\xi\right)\circ\left(t\cdot\psi\right)}{rrr} &  &  & \left(\vartheta\circ\xi\right)^{\#}\circ\left(t\cdot\psi\right)\ar[rrr]\myard{\textnormal{bl}_{\#}\left(\vartheta\circ\xi,\psi\right)}{rrr} &  &  & \left(\left(\vartheta\circ\xi\right)\circ\psi\right)^{\#}
}
\]
\end{enumerate}
\end{defn}

\begin{rem}
When $\mathscr{K}$ is a 2-category each $\textnormal{bl}_{\#}\left(\vartheta,\xi\right)$
is an equality and we recover the usual one dimensional version of
pasting operators.
\end{rem}

\begin{rem}
In our case of interest we will take $\mathscr{K}$ to be the tricategory
of 2-categories, pseudofunctors, pseudonatural transformations and
modifications, $D=E=\mathscr{C}$, $t=T$ and $s=u=PT$. To replace
$f$, we denote by $X\in\mathscr{C}$ a generalized element $X\colon\mathscr{I}\to\mathscr{C}$
of $\mathscr{C}$ so that $C=\mathscr{I}$ and a pseudo-pasting operator
consists of functors $\mathscr{\mathscr{K}}\left(\mathscr{I},\mathscr{C}\right)\left(X,PTY\right)\to\mathscr{K}\left(\mathscr{I},\mathscr{C}\right)\left(TX,PTY\right)$.
This may be further simplified since it is equivalent to a coherent
family in which each $\mathscr{I}$ is the terminal 2-category\footnote{We remind the reader that in the case of a no-iteration monad (or
an extension system) the naturality of the unit and multiplication
may be shown as a consequence of the definition. Indeed, a similar
calculation may be done for the distributive law version concerning
the resulting $\alpha\colon TPT\to PT$. Here $\left(PTf\right)^{\lambda}\cong\alpha_{Y}\cdot TPTf$
by blistering, and the isomorphism $\left(PTf\right)^{\lambda}\cong PTf\cdot\alpha_{X}$
is $\Psi^{\eta TY\cdot uY\cdot f,\textnormal{id}_{PTX}}$ simplified
using $\xi^{uY\cdot f}$. This means one may use global elements in
place of generalized elements.}.
\end{rem}

\begin{rem}
A basic example of a pseudo-pasting operator is $\left(-\right)^{u}$
induced by a pseudonatural transformation $u\colon\textnormal{1}_{\mathscr{C}}\to T$.
\end{rem}

We can now give the no-iteration definition of a pseudodistributive
law, which involves no iteration of the pseudomonads $T$ and $P$.
\begin{defn}
\label{psnoit} A \emph{pseudodistributive law (in no-iteration form)}
of pseudomonads  $\left(T,u,\left(-\right)^{T}\right)$ over $\left(P,\eta,\left(-\right)^{P}\right)$
on a 2-category $\mathscr{C}$ is a pseudo-pasting operator
\[
\left(-\right)_{X,Y}^{\lambda}\colon\mathscr{C}\left(X,PTY\right)\to\mathscr{C}\left(TX,PTY\right),\qquad X,Y\in\mathscr{C}
\]
along with for all $f\colon X\to PTY$, $k\colon X\to TY$, and $g\colon Y\to PTZ$
a family of invertible 2-cells
\[
\xymatrix@=1em{X\ar@/^{0pc}/[dd]_{uX}\ar@/^{1pc}/[rrdd]^{f} &  &  &  & TX\ar[dd]_{k^{T}}\ar@/^{1pc}/[rrdd]^{\left(\eta TY\cdot k\right)^{\lambda}} &  &  &  & TX\ar[dd]_{f^{\lambda}}\ar@/^{1pc}/[rrdd]^{\left(\left(g^{\lambda}\right)^{P}f\right)^{\lambda}}\\
\; & \dltwocell[0.1]{l}{\psi^{f}} &  &  & \; & \dltwocell[0.1]{l}{\xi^{k}} &  &  & \; & \dltwocell[0.15]{l}{\Psi^{g,f}}\\
TX\ar[rr]_{f^{\lambda}} & \; & PTY &  & TY\myard{\eta TY}{rr} &  & PTY &  & PTY\ar[rr]_{\left(g^{\lambda}\right)^{P}} &  & PTZ
}
\]
natural in $f,k$, $g$ and respecting whiskering and blistering\footnote{The whiskering axiom ensures $f^{\lambda}K=\left(fK\right)^{\lambda}$
and the blistering axiom ensures $g^{\lambda}Tf=\left(gf\right)^{\lambda}$.
That the family of $\psi^{f}$ respects these axioms ensures the following:
with $X,Y\colon\mathscr{B}\to\mathscr{C}$ and $K\colon\mathscr{A}\to\mathscr{B}$
we have $\psi^{fK}=\psi^{f}K$ (whiskering), and $\psi_{f}$ may be
composed by a naturality square of $u$ at $p\colon W\to X$ to give
$\psi_{fp}$ (blistering). Both families of $\xi^{k}$ and $\Psi^{g,f}$
satisfy similar conditions. Like naturality, we will avoid detailing
each whiskering and blistering condition as they are uninteresting.
If the reader requires them, they may be derived from the warping
formulation as instances of the axioms of morphisms of pseudo-pasting
operators.}, such that we have for all $g\colon Y\to PTZ$ the triangle equation;
\begin{equation}
\tag{{I1}}\xymatrix@=1em{TY\ar[rrrrrr]^{g^{\lambda}}\ar@{=}[dd]\ar@/_{2pc}/[dddddddd]_{\textnormal{id}} &  &  & \; &  &  & PTZ\ar@{=}[dd]\\
 &  & \dtwocell[0.6]{r}{\left(\psi^{g}\right)^{\lambda}} & \;\\
TY\ar@{=}[dd]\ar[rrrrrr]_{\left(g^{\lambda}uY\right)^{\lambda}} &  &  &  & \; &  & PTZ\ar@{=}[dd]\\
\\
TY\ar@{=}[dd]\ar[rrrrrr]^{\left(\left(g^{\lambda}\right)^{P}\cdot\eta TY\cdot uY\right)^{\lambda}} &  &  & \; &  &  & PTZ\ar@{=}[dd] & \ar@{}[]|-{=} & \textnormal{id}_{g^{\lambda}}\\
 &  & \; & \;\dtwocell[0.1]{u}{\Psi^{g,\eta TY\cdot uY}}\\
TY\ar[rrr]^{\left(\eta TY\cdot uY\right)^{\lambda}}\ar[dd]^{\left(uY\right)^{T}} &  &  & PTY\ar@/_{0pc}/[rrr]^{\left(g^{\lambda}\right)^{P}}\ar@{=}[dd] &  &  & PTZ\ar@{=}[dd]\\
 & \dtwocell[0.7]{r}{\xi^{uY}} & \; &  & \;\\
TY\ar[rrr]_{\eta TY}\ar@/_{2pc}/[rrrrrr]_{g^{\lambda}} &  &  & PTY\ar@/_{0pc}/[rrr]_{\left(g^{\lambda}\right)^{P}} & \; &  & PTZ
}
\label{I1}
\end{equation}
for all $f\colon X\to PTY$, $g\colon Y\to PTZ$ and $h\colon Z\to PTW$
the pentagon equation;
\begin{equation}
\tag{{I2}}\xymatrix@=1em{TX\ar[rrrrrr]^{\left(\left(\left(\left(h^{\lambda}\right)^{P}g\right)^{\lambda}\right)^{P}f\right)^{\lambda}}\ar@{=}[dd]\dtwocell[0.42]{rrdd}{\left(\Psi^{\left(h^{\lambda}\right)^{P}g,f}\right)^{P}} &  &  &  &  &  & PTW\ar@{=}[dd]\\
 & \; &  & \;\\
TX\ar@{=}[dd]\ar[rr]^{f^{\lambda}} &  & PTY\ar[rrrr]^{\left(\left(\left(h^{\lambda}\right)^{P}g\right)^{\lambda}\right)^{P}}\ar@{=}[dd] &  &  &  & PTW\ar@{=}[dd]\\
 &  &  &  & \dtwocell[0]{}{\left(\Psi^{h,g}\right)^{P}}\\
TX\ar@{=}[dd]\ar[rr]^{f^{\lambda}} &  & PTY\ar[rrrr]_{\left(\left(h^{\lambda}\right)^{P}g^{\lambda}\right)^{P}}\ar@{=}[dd] &  &  &  & PTW\ar@{=}[dd]\\
\\
TX\ar[rr]_{f^{\lambda}} &  & PTY\ar[rr]_{\left(g^{\lambda}\right)^{P}} &  & PTZ\ar[rr]_{\left(h^{\lambda}\right)^{P}} &  & PTW
}
\label{I2}
\end{equation}
is equal to
\[
\xymatrix@=1em{TX\ar[rrrrrr]^{\left(\left(\left(\left(h^{\lambda}\right)^{P}g\right)^{\lambda}\right)^{P}f\right)^{\lambda}}\ar@{=}[dd] &  &  &  &  &  & PTW\ar@{=}[dd]\\
 &  & \; & \dtwocell[0.5]{l}{\left(\left(\Psi^{h,g}\right)^{P}f\right)^{\lambda}}\\
TX\ar@{=}[dd]\ar[rrrrrr]|-{\left(\left(\left(h^{\lambda}\right)^{P}g^{\lambda}\right)^{P}f\right)^{\lambda}} &  &  &  &  &  & PTW\ar@{=}[dd]\\
\\
TX\ar[rrrrrr]|-{\left(\left(h^{\lambda}\right)^{P}\left(g^{\lambda}\right)^{P}f\right)^{\lambda}}\ar@{=}[dd] &  &  &  &  &  & PTW\ar@{=}[dd]\\
 &  &  & \; & \dtwocell[0.9]{l}{\underset{\;}{\Psi^{h,\left(g^{\lambda}\right)^{P}f}}}\\
TX\ar[rrrr]^{\left(\left(g^{\lambda}\right)^{P}f\right)}\ar@{=}[dd] &  &  &  & PTZ\ar[rr]^{\left(h^{\lambda}\right)^{P}}\ar@{=}[dd] &  & PTW\ar@{=}[dd]\\
 &  & \dtwocell[0.5]{r}{\Psi_{\;}^{g,f}} & \;\\
TX\ar[rr]_{f^{\lambda}} &  & PTY\ar[rr]_{\left(g^{\lambda}\right)^{P}} &  & PTZ\ar[rr]_{\left(h^{\lambda}\right)^{P}} &  & PTW
}
\]
for all $k\colon X\to TY$ compatibility of $\psi$ with $\mathscr{C}\to\mathscr{C}_{P}$;
\begin{equation}
\tag{{IC1}}\xymatrix@=1em{X\ar[dd]^{uX}\ar@/_{1pc}/[dddd]_{k}\ar[rrr]^{k} &  &  & TY\ar[dd]^{\eta TY}\\
 & \dtwocell[0.2]{r}{\psi^{\eta TY\cdot k}} & \;\\
TX\ar[rrr]^{\left(\eta TY\cdot k\right)^{\lambda}}\ar[dd]^{k^{T}} &  &  & PTY\ar@{=}[dd] & \ar@{}[]|-{=} & \textnormal{id}_{\eta TY\cdot k}\\
 & \dtwocell[0.7]{r}{\xi^{k}} & \;\\
TY\ar[rrr]_{\eta TY} &  &  & PTY
}
\label{IC1}
\end{equation}
and for all $f\colon X\to TY$ and $g\colon Y\to TZ$ compatibility
of $\Psi$ with $\mathscr{C}\to\mathscr{C}_{P}$;
\begin{equation}
\tag{{IC2}}\xymatrix@=1em{TX\ar@{=}[dd]\ar[rrrr]^{\left(\left(\eta Z\cdot g\right)^{\lambda}f\right)^{\lambda}} &  &  &  & PTZ\ar@{=}[dd]\\
\\
TX\ar[rrrr]^{\left(\left(\left(\eta Z\cdot g\right)^{\lambda}\right)^{P}\eta Y\cdot f\right)^{\lambda}}\ar@{=}[dd] &  &  &  & PTZ\ar@{=}[dd] &  & TX\ar@{=}[dd]\ar[rrrr]^{\left(\left(\eta Z\cdot g\right)^{\lambda}f\right)^{\lambda}} &  & \; &  & PTZ\ar@{=}[dd]\\
 & \; & \dtwocell[0.2]{l}{\Psi_{\;}^{\eta Z\cdot g,\eta Y\cdot f}} & \; &  &  &  & \dtwocell[0.1]{rrru}{\left(\xi^{g}f\right)^{\lambda}} & \;\\
TX\ar[rr]_{\left(\eta Y\cdot f\right)^{\lambda}}\ar[dd]_{f^{T}} &  & PTY\ar[rr]_{\left(\left(\eta Z\cdot g\right)^{\lambda}\right)^{P}}\ar@{=}[dd] &  & PTZ\ar@{=}[dd] & = & TX\ar[rrrr]^{\left(\eta Z\cdot g^{T}\cdot f\right)^{\lambda}}\ar[dd]_{g^{T}f^{T}} &  & \; &  & PTZ\ar@{=}[dd]\\
 & \dtwocell[0.2]{ld}{\xi^{f}} & \; & \dtwocell[0.2]{ld}{\left(\xi^{g}\right)^{P}} &  &  &  & \dtwocell[0.1]{rrrd}{\xi^{g^{T}f}} & \; & \;\\
TY\ar[rr]_{\eta TY}\ar@{=}[dd] &  & PTY\ar[rr]_{\left(\eta Z\cdot g^{T}\right)^{P}} &  & PTZ\ar@{=}[dd] &  & TZ\ar[rrrr]_{\eta TZ} &  &  &  & PTZ\\
\\
TY\ar[rrrr]_{\eta Z\cdot g^{T}} &  &  &  & PTZ
}
\label{IC2}
\end{equation}
\end{defn}

\begin{rem}
The data $\xi^{uY}$ suffices for constructing the extension (the
first two axioms) whereas the the more general data $\xi^{k}$ is
required for compatibility (the remaining two axioms).
\end{rem}

\subsection{Warping definition of pseudodistributive laws}

In \cite{mndwarp} Street and Lack showed that pseudomonads presented
in extensive form are in equivalence with warpings. These (pseudomonad)
warpings correspond to Kleisli bicategory structures, with the two
unitors and associator of the pseudomonad being used to construct
the two unitors and associator of the Kleisli bicategory. Conversely
the two unitors and associator of the pseudomonad can be recovered
from that of the Kleisli bicategory structure. It is by this correspondence,
that MacLane and Paré's coherence theorem \cite{MacPare} which applies
to the Kleisli bicategory, must apply to the pseudomonad data also.
This is how three of the no-iteration pseudomonad axioms were shown
to be redundant \cite{mndwarp}.

Here we extend this correspondence from pseudomonads to pseudodistributive
laws, using the fact that these laws correspond to extended pseudomonads
$\widetilde{T}$ on the Kleisli bicategory $\mathscr{C}_{P}$. The
technicality here is that pseudodistributive laws are more than just
a $\widetilde{T}$, as we also have compatibility data and axioms.
Hence the coherence theorem only applies to the Kleisli bicategory
of $\widetilde{T}$, constructed from the pseudodistributive law data
$\xi^{uX}$ and arbitrary $\psi$ and $\Psi$, but not the more general
compatibility data $\xi^{k}$, explaining the need for two additional
compatibility axioms.

The definition of warping used by Street and Lack \cite{mndwarp}
does not directly mention pasting operators due to simplifications
which are possible in that reduced case. In our more general setting
however we will use both pseudo-pasting operators and their morphisms,
as it is unclear if one can give a natural definition without them.
\begin{defn}
\label{pastingmor} Given two pseudo-pasting operators in a tricategory
$\mathscr{K}$
\[
\left(-\right)^{\#},\left(-\right)^{\#'}\colon\mathscr{K}\left(C,D\right)\left(-,s-\right)\to\mathscr{K}\left(C,E\right)\left(t-,u-\right)
\]
a\emph{ morphism of pseudo-pasting operators} \emph{$\left(-\right)^{\wp}\colon\left(-\right)^{\#}\rightsquigarrow\left(-\right)^{\#'}$}
is a family of natural transformations indexed by $f\colon C\to D$
and $g\colon C\to\textnormal{dom}s=\textnormal{dom}u$
\[
\left(-\right)_{f,g}^{\wp}\colon\left(-\right)_{f,g}^{\#}\rightsquigarrow\left(-\right)_{f,g}^{\#'}\colon\mathscr{K}\left(C,D\right)\left(f,sg\right)\to\mathscr{K}\left(C,E\right)\left(tf,ug\right)
\]
such that:
\begin{enumerate}
\item (whiskering) for all $\vartheta\colon f\Rightarrow sg\colon C\to D$
and $h\colon A\to C$ we have $\vartheta^{\wp}h=\left(\vartheta h\right)^{\wp}$;
\item (blistering) all $\vartheta\colon f\Rightarrow sg\colon C\to D$ and
$\xi\colon p\Rightarrow f\colon C\to D$ render commutative
\[
\xymatrix@=1em{\vartheta^{\#}\left(t\xi\right)\ar[d]_{\vartheta^{\wp}\left(t\xi\right)}\ar[rrrr]^{\textnormal{bl}_{\#}\left(\vartheta,\xi\right)} &  &  &  & \left(\vartheta\xi\right)^{\#}\ar[d]^{\left(\vartheta\xi\right)^{\wp}}\\
\vartheta^{\#'}\left(t\xi\right)\ar[rrrr]_{\textnormal{bl}_{\#'}\left(\vartheta,\xi\right)} &  &  &  & \left(\vartheta\xi\right)^{\#'}
}
\]
\end{enumerate}
\end{defn}

This formulation of pseudodistributive laws is designed to look as
similar as possible to that of warpings for pseudomonads, and thus
might be called a ``distributivity warping''. Whilst this warping
form may appear to be a complicated formulation, it provides the machinery
which simplifies the previous formulations and also gives the data
of the resulting compatible Kleisli bicategory structure. It may be
possible to more easily derive this formulation by iterating the notion
of warping (and enforcing compatibility).
\begin{rem}
The definition of pasting operators is not suggestive of closure under
composition, though those we are considering certainly are composable
as they correspond to 2-cells. This is why a number of composition
functors appear in the following definition.
\end{rem}

\begin{defn}
\label{pswarp} A \emph{pseudodistributive law (in warping form)}
of pseudomonads  $\left(T,u,\left(-\right)^{T}\right)$ over $\left(P,\eta,\left(-\right)^{P}\right)$
on a 2-category $\mathscr{C}$ consists of:
\begin{itemize}
\item a pseudo-pasting operator $\left(-\right)_{X,Y}^{\lambda}\colon\mathscr{C}\left(X,PTY\right)\to\mathscr{C}\left(TX,PTY\right)$;
\item morphisms of pseudo-pasting operators
\[
\xymatrix@=1em{ &  &  & \dtwocell[0.3]{dl}{\left(-\right)_{X,Y}^{\psi}}\\
\mathscr{C}\left(X,PTY\right)\myard{\left(-\right)_{X,Y}^{\lambda}\cdot\left(-\right)_{X}^{u}}{rrr}\ar@/^{2pc}/[rrrrr]^{\textnormal{id}} &  & \; & \mathscr{C}\left(TX,PTY\right)\mathscr{C}\left(X,TX\right)\myard{\circ}{rr} &  & \mathscr{C}\left(X,PTY\right)
}
\]
\[
\xymatrix@=1em{\mathscr{C}\left(X,TY\right)\myar{\left(-\right)_{X,TY}^{\eta}}{rrr}\myard{\left(-\right)_{TY}^{\eta}\cdot\left(-\right)_{X,Y}^{T}\quad\quad}{rrrd} &  & \; & \mathscr{C}\left(X,PTY\right)\myar{\left(-\right)_{X,Y}^{\lambda}}{rr}\dtwocell[0.5]{d}{\left(-\right)_{X,Y}^{\xi}} &  & \mathscr{C}\left(TX,PTY\right)\\
 &  &  & \mathscr{C}\left(TY,PTY\right)\mathscr{C}\left(TX,TY\right)\myard{\circ}{rur}
}
\]
\[
\xymatrix@=1em{\mathscr{C}\left(Y,PTZ\right)\mathscr{C}\left(X,PTY\right)\myar{\left(\left(-\right)_{Y,Z}^{\lambda}\right)_{TY,TZ}^{P}\mathscr{C}\left(X,PTY\right)}{rrrr}\ar[d]^{\mathscr{C}\left(Y,PTZ\right)\left(-\right)_{X,Y}^{\lambda}} &  &  &  & \mathscr{C}\left(PTY,PTZ\right)\mathscr{C}\left(X,PTY\right)\ar[d]^{\circ}\\
\mathscr{C}\left(Y,PTZ\right)\mathscr{C}\left(TX,PTY\right)\ar[d]^{\left(\left(-\right)_{Y,Z}^{\lambda}\right)_{TY,TZ}^{P}\mathscr{C}\left(TX,PTY\right)} &  & \dtwocell[0.7]{r}{\left(-\right)_{X,Y,Z}^{\Psi}} & \; & \mathscr{C}\left(X,PTZ\right)\ar[d]^{\left(-\right)_{X,Z}^{\lambda}}\\
\mathscr{C}\left(PTY,PTZ\right)\mathscr{C}\left(TX,PTY\right)\myard{\circ}{rrrr} &  &  &  & \mathscr{C}\left(TX,PTZ\right)
}
\]
satisfying (labeling only the cells due to space constraints)\footnote{On components these axioms reduce to the earlier no-iteration versions.
Moreover, it should be straightforward to recover the full diagrams
if one substitutes the three morphisms of pasting operators in full
detail, though some composites of pasting operators must be decomposed.} the triangle equation;
\begin{equation}
\tag{{P1}}\xymatrix@=1em{\bullet\ar[d]\ar@/^{1pc}/[rrrdrdrrrr]\ar@/_{1.2pc}/[ddddd] &  & \;\\
\bullet\ar[rrrdr]\ar[ddd]\ar[rrr] &  & \;\dtwocell[0.4]{u}{\left(-\right)_{Y,Z}^{\psi}} & \bullet\ar[rrrrrd]\\
\dtwocell[0.4]{dr}{\mathscr{C}\left(Y,PTZ\right)\cdot\left(-\right)_{Y,Y}^{\xi}} &  &  &  & \bullet\ar[rr]\ar[d] &  & \bullet\ar[rr]\dtwocell[0.5]{lld}{\left(-\right)_{Y,Y,Z}^{\Psi}} &  & \bullet\ar[rr] &  & \bullet & = & \textnormal{id}_{\left(-\right)_{Y,Z}^{\lambda}}\\
 & \; & \; &  & \bullet\ar[rr] & \; & \bullet\ar[urrrr]\\
\bullet\ar[rrrur]\ar[d]\\
\bullet\ar@/_{1.2pc}/[uuurrrrrrrrrr]
}
\label{P1}
\end{equation}
the pentagon equation;
\begin{equation}
\tag{{P2}}\xymatrix@=1em{ &  &  &  & \bullet\ar[rrrr]\dtwocell[0.5]{d}{\left(-\right)_{Y,Z,W}^{\Psi}\cdot\mathscr{C}\left(X,PTY\right)} &  &  &  & \bullet\ar[rrd]\\
\bullet\ar@{=}[d]\ar[rrrr]\ar[rrrru] &  &  &  & \bullet\ar[rrr] &  & \; & \bullet\ar[rrr] &  &  & \bullet\ar[dd]\\
\bullet\ar[rrrrrr]\ar[d] &  &  &  &  &  & \bullet\ar[d]\ar[rrr] &  &  & \bullet\ar[d]\ar[rd]\\
\bullet\ar[d] & \dtwocell[0.3]{l}{\mathscr{C}\left(Z,PTW\right)\cdot\left(-\right)_{X,Y,Z}^{\Psi}} &  &  &  &  & \bullet\ar[d]\ar[rrr] &  &  & \bullet\ar[d] & \bullet\ar[ld]\\
\bullet\ar[rrrrrr]\ar[d] &  &  &  &  &  & \bullet\ar[d] & \dtwocell[0.4]{l}{\left(-\right)_{X,Z,W}^{\Psi}} &  & \bullet\ar[d]\\
\bullet\ar[rrrrrr] &  &  &  &  &  & \bullet\ar[rrr] &  &  & \bullet
}
\label{P2}
\end{equation}
is equal to
\[
\xymatrix@=1em{\bullet\ar[d]\ar[rrr] &  &  & \bullet\ar[rrr] &  &  & \bullet\ar[d]\ar[rrr] &  &  & \bullet\ar[d]\\
\dtwocell[0.6]{rd}{\left(-\right)_{Y,Z,W}^{\Psi}\cdot\mathscr{C}\left(TX,PTY\right)}\bullet\ar[d]\ar[rrr] &  &  & \bullet\ar[rrr] &  &  & \bullet\ar[d] &  &  & \bullet\ar[dd]\\
\bullet\ar[d]\ar[rrr] & \; &  & \bullet\ar[rrr] &  &  & \bullet\ar[d] & \dtwocell[0.45]{ul}{\left(-\right)_{X,Y,W}^{\Psi}}\\
\bullet\ar[rrrrrr]\ar[rrrrrrd] &  &  &  &  &  & \bullet\ar[rrr] &  &  & \bullet\\
 &  &  &  &  &  & \bullet\ar[rrru]
}
\]
compatibility of $\left(-\right)^{\psi}$ with $\mathscr{C\to\mathscr{C}}_{P}$;
\begin{equation}
\tag{{PC1}}\xymatrix@=1em{\bullet\ar[dd]\ar[rrrr] &  & \dtwocell[0.5]{ddll}{\left(-\right)_{X,Y}^{\xi}} &  & \bullet\ar[d]\ar@/^{1.1pc}/[rrrdd]\\
 &  &  &  & \bullet\ar[rrd] & \dtwocell[0.1]{dl}{\left(-\right)_{X,Y}^{\psi}} &  &  & = & \textnormal{id}_{\left(-\right)_{X,TY}^{\eta}}\\
\bullet\ar[rr]\ar@/_{1pc}/[rrrrrrd] &  & \bullet\ar[rur]\ar[rr] &  & \bullet\ar[rr]\ar[rrd] &  & \bullet\ar[r] & \bullet\\
 &  &  &  &  &  & \bullet\ar[ur]
}
\label{PC1}
\end{equation}
compatibility of $\left(-\right)^{\Psi}$ with $\mathscr{C\to\mathscr{C}}_{P}$;
\begin{equation}
\tag{{PC2}}\xymatrix@=1em{\bullet\ar[rr]\ar[d] &  & \bullet\ar[rrrr]\ar[d] & \dtwocell[0.5]{dl}{\left(Y,PTZ\right)\left(-\right)_{X,Y}^{\xi}} &  &  & \bullet\myar{}{rrrr}\ar[d] &  &  &  & \bullet\ar[d]\\
\bullet\ar[r]\ar@/_{0pc}/[dd] & \bullet\ar[rd]_{}\ar@/_{0.4pc}/[rrrdrru]|-{} & \bullet\ar[rrrr]\dtwocell[0.6]{d}{\left(-\right)_{Y,Z}^{\xi}\left(TX,PTY\right)} &  & \; &  & \bullet\ar[d] & \; & \dtwocell[0.5]{ldl}{\left(-\right)_{X,Y,Z}^{\Psi}} &  & \bullet\ar[dd]\\
 &  & \bullet\ar[rrrr]\ar[d] &  & \; &  & \bullet\ar[d] & \;\\
\bullet\ar[rr]\ar@/_{1pc}/[rrrrrrrrrr] &  & \bullet\ar[rrrr] &  &  &  & \bullet\myard{}{rrrr} &  &  &  & \bullet
}
\label{PC2}
\end{equation}
is equal to
\[
\xymatrix@=1em{ & \bullet\ar[r] & \bullet\dtwocell[0.6]{d}{\left(-\right)_{Y,Z}^{\xi}\left(X,PTY\right)}\ar@/^{0pc}/[rrrrrud] & \; &  &  &  & \bullet\ar[r] & \bullet\ar[r] & \bullet\ar[dd]\\
\bullet\ar[ur]\ar[r]\ar@/_{0.5pc}/[drdrrrruurdl] & \bullet\ar[ur]\ar[rrrr] & \; & \; &  & \bullet\ar@/_{0pc}/[rru] &  & \bullet\ar[rur]\ar[d]\dtwocell[0.2]{drr}{\left(-\right)_{X,Z}^{\xi}} &  & \;\\
 &  &  &  &  & \bullet\ar[r]\ar[rru]\ar[rruu] & \bullet\ar[r] & \bullet\ar[rr] &  & \bullet
}
\]
\end{itemize}
\end{defn}

\begin{rem}
The reason for giving the coherence axioms here (as opposed to just
relying on the no-iteration form) is to remind the reader how the
conditions of pseudodistributive laws correspond to bicategorical
coherence axioms.
\end{rem}

\begin{rem}
Warpings in the context of wreaths have been considered by Chikhladze
\cite{Chk}. This should give a realistic approach to finding the
coherence axioms of pseudo-wreaths. Furthermore, there are structures
between distributive laws and wreaths which may be of interest. For
instance, the above formulation with the compatibility data\footnote{The loss of compatibility data may break the bijectivity of the change
of variables $\lambda\mapsto\alpha$.} and axioms omitted is more general than a distributive law, less
general than a wreath, and has a simpler presentation than both.
\end{rem}

\section{Equivalence of presentations of pseudodistributive laws\label{2dproof}}

As all five of our definitions of pseudodistributive laws are new,
we must justify them by showing they are equivalent to a pseudodistributive
law in the sense of Marmolejo \cite{marm1999}. This is the reason
for proving the following theorem, which makes use of the equivalence
between Marmolejo's definition of pseudodistributive law and compatible
extensions of a pseudomonad to the Kleisli bicategory shown in \cite{cheng2003}.
\begin{thm}
\label{pseudodistequiv} Given two pseudomonads $\left(T,u,m\right)$
and $\left(P,\eta,\mu\right)$ on a 2-category $\mathscr{C}$, the
following are in equivalence:
\begin{enumerate}
\item a pseudodistributive law $\lambda\colon TP\to PT$ in pseudomonoidal
form;
\item a pseudodistributive law $\lambda\colon TP\to PT$ in Kleisli-decagon
form;
\item a pseudodistributive law $\alpha\colon TPT\to PT$ in pseudoalgebra
form;
\item a pseudodistributive law $\left(-\right)^{\lambda}\colon\mathscr{C}\left(-,PT-\right)\to\mathscr{C}\left(T-,PT-\right)$
in no-iteration form;
\item a pseudodistributive law $\left(-\right)^{\lambda}\colon\mathscr{C}\left(-,PT-\right)\to\mathscr{C}\left(T-,PT-\right)$
in warping form;
\item an extension of $\left(T,u,m\right)$ to a pseudomonad on the Kleisli
bicategory of $\left(P,\eta,\mu\right)$ which is compatible with
the Kleisli pseudofunctor $\mathscr{C}\to\mathscr{C}_{P}$.
\end{enumerate}
\end{thm}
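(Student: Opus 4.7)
The strategy is to establish a cycle of equivalences with (2), the Kleisli-decagon form, as the hub. I would prove $(5) \Leftrightarrow (2)$ directly, then run $(2) \Rightarrow (3) \Rightarrow (4) \Rightarrow (2)$, and finally address $(1) \Leftrightarrow (2)$. Once (1)--(5) are mutually equivalent, the equivalence with Marmolejo's original pseudodistributive law \cite{marm1999} (and hence with pseudomonad extensions to the Kleisli bicategory in the sense of \cite{cheng2003}) is recovered through (5); so what really needs to be verified is that the five axioms (W1)--(W5) together with the triangle data imply the ten axioms of the original formulation.

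The equivalence $(5) \Leftrightarrow (2)$ is the design principle of the Kleisli-decagon form. Given an extension of $(T,u,m)$ to a pseudomonad $\widetilde{T}$ on the Kleisli bicategory $\mathbf{Kl}(P)$, I would present $\widetilde{T}$ in pseudoextensive form (Definition \ref{pseudomonadextensive}). Its extension operator assigns to each $f\colon X \to PY$ a 1-cell $TX \to PTY$; restricting to $f = \eta_X$ recovers $\lambda \colon TP \to PT$, while the three structural isomorphisms $\phi$, $\theta$, $\delta$ restrict respectively to $\omega_1$, $\omega_2$, and the decagon $\Omega$. The two coherence conditions of pseudoextensive form then unpack precisely into (D1) and (D2), and the reverse passage uses pseudonaturality of $\lambda$ to extend the data of (2) to the full extension operator on $\mathbf{Kl}(P)$.

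For the chain $(2) \Rightarrow (3) \Rightarrow (4) \Rightarrow (2)$, the change of variables $\alpha := Pm \cdot \lambda T \colon TPT \to PT$ converts the decagon $\Omega$ into the pentagon $\Psi$ of (M2), and the triangles $\omega_1$, $\omega_2$ into $\psi_1$, $\psi_2$ after pasting with appropriate pseudomonad unitors; one recovers $\lambda := \alpha \cdot TPu$ up to an invertible unitor of $P$. The passage $(3) \Leftrightarrow (4)$ is a direct translation via $f^\lambda := \alpha \cdot Tf$ for $f\colon X \to PTY$, with the axioms (M1), (M2) transcribing to (I1), (I2) by routine pasting.

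The main obstacle is $(1) \Leftrightarrow (2)$, which also delivers the promised redundancy of axioms (W6)--(W10). For $(2) \Rightarrow (1)$, I would obtain $\omega_3$ and $\omega_4$ from $\Omega$ by restricting along $\eta TPT$ and simplifying with $\omega_2$, respectively along $uTP$ and simplifying with $\omega_1$; axioms (W1)--(W5) then follow from (D1), (D2) and the pseudomonad coherences. For $(1) \Rightarrow (2)$, the decagon $\Omega$ is defined as the common value of the two pastings appearing in (W5), and (D1), (D2) follow from (W1)--(W5) together with the three Kelly-style redundant pseudomonad identities recorded in Remark \ref{consequences}. The redundant axioms (W6)--(W10) then drop out as direct consequences of those same three identities, applied respectively to $P$, to $T$, and to their unit compatibility, transferred through the correspondence between (1) and (2); this is the conceptual reason that exactly three of Marmolejo and Wood's axioms are automatic, and it is the delicate part of the argument that I expect to be the main obstacle.
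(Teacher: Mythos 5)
Your proposal is correct in outline and uses the same core constructions as the paper: the Kleisli-decagon form (2) as the hub, the change of variables $\alpha:=Pm\cdot\lambda T$ with inverse $\lambda:=\alpha\cdot TPu$ relating (2) and (3), the operator $f^{\lambda}:=\alpha\cdot Tf$ relating (3) and (4), the assembly of $\Omega$ from $P\omega_{3}T$ and $\omega_{4}T$ with the pentagons recovered by restricting the decagon along units and collapsing with $\omega_{1}$, $\omega_{2}$, and the explanation of the redundant axioms via Kelly's three redundant pseudomonad identities. Where you genuinely diverge is in how the loop is closed. You propose to prove $(5)\Leftrightarrow(2)$ in both directions and, in $(1)\Leftrightarrow(2)$, to verify head-on that (W1)--(W5) imply all ten of Marmolejo's axioms --- you rightly flag this as the main obstacle. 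The paper sidesteps exactly this: it only constructs the \emph{one-way} assignations $(1)\mapsto(2)$ and $(2)\mapsto(5)$, checks they are fully faithful, and then invokes the already-known equivalence $(5)\simeq(1)$ of \cite{cheng2003}; since the composite $(1)\mapsto(2)\mapsto(5)\simeq(1)$ is isomorphic to the identity, all three are equivalences, and the redundancy of (W6)--(W10) is extracted \emph{afterwards} from the decagon correspondence rather than being an input to the equivalence. This buys a substantial saving: no direct derivation of the ten axioms from the five is ever needed. Your route is viable but considerably more laborious at precisely the point you identify as delicate; if you pursue it, consider adopting the paper's full-faithfulness argument instead.

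Two smaller points. First, restricting the extension operator of $\widetilde{T}$ to $f=\eta_{X}$ does not recover $\lambda$ (for $f=\eta_{TY}\colon TY\to PTY$ the extension has source $T^{2}Y$, not $TPY$); what recovers $\lambda_{Y}$ is the extension of $Pu_{Y}\colon PY\to PTY$ (equivalently, $\alpha_{Y}=(\mathrm{id}_{PTY})^{\widetilde{T}}$ followed by precomposition with $TPu_{Y}$), simplified using the triangle isomorphisms. Second, the paper handles the redundancy of (W6) and (W7) separately (via Marmolejo--Wood, or via the redundant unit axiom for pseudoalgebras), reserving the three Kelly identities for (W8), (W9), (W10); your claim that all five follow uniformly from the three Kelly identities would need its own justification for the first two.
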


We will give a sketch proof of this theorem by constructing functors
as in the below diagram
\begin{equation}
\xymatrix@R=1em{\lambda\textnormal{-8}\ar[r]^{\left(i\right)} & \lambda\textnormal{-5}\ar[r]\ar[r]^{\left(ii\right)} & \text{\ensuremath{\lambda}}\textnormal{-dec}\ar[d]^{\left(iii\right)}\\
 &  & \alpha\textnormal{ ps-alg}\ar[r]^{\left(iv\right)}\ar[d]^{\left(v\right)} & \left(-\right)^{\lambda}\textnormal{ no-it}\myar{\left(vi\right)}{r} & \left(-\right)^{\lambda}\textnormal{ warp}\\
\widetilde{T}\textnormal{ monoidal}\ar[rr]_{\textnormal{equiv}}\ar[uu]^{\textnormal{equiv}} &  & \widetilde{T}\textnormal{ no-iteration}
}
\label{main}
\end{equation}
and explaining why these are all equivalences. The top row lists Marmolejo
and Wood's 8-axiom presentation, our 5-axiom reduction and the decagon
presentation. The middle row lists the pseudoalgebra, no-iteration
and warping presentations. The bottom row references compatible extensions
of $T$ to the Kleisli bicategory of $P$, presented in monoidal or
no-iteration form. We label by ``equiv'' those functors which are
well known to be equivalences by results of Marmolejo and Wood \cite{marm2008,NoIteration},
and $\left(i\right)$ to $\left(vi\right)$ the remaining functors
we must define and show are equivalences.
\begin{rem}
We will not burden this paper with the definitions of morphisms of
pseudodistributive laws, as these are simply modifications $\lambda\Rightarrow\lambda'$
or $\alpha\Rightarrow\alpha'$ such that the obvious pasting diagrams
agree.
\end{rem}

\begin{rem}
The pseudoalgebra, no-iteration and warping formulations are in a
sense all the same definition but with different notations, and so
all have a similar set of four axioms. It is only a non-trivial change
of variables which causes a non-trivial change of axioms.
\end{rem}

\begin{rem}
Given the equivalence of our five formulations of pseudodistributive
law, one can show any reasonable coherence axiom must hold. This is
since any such condition may be rewritten in terms of the standard
pseudomonoidal form, and then shown directly. For example, to verify
one has a pseudoalgebra structure on $\alpha$, meaning we have coherent
isomorphisms $\alpha\cdot mPT\cong\alpha\cdot T\alpha$, one may rewrite
$\alpha$ in terms of $\lambda$ and use the standard axioms of the
pseudomonoidal form.
\end{rem}

In proving coherence conditions concerning pseudomonads and their
data, it is worth remembering that MacLane and Par\'e's coherence
theorem for bicategories \cite{MacPare} applies. This is a direct
consequence of the warping formulation of pseudomonads \cite{mndwarp},
which shows that pseudomonads may be expressed in terms of the data
of the resulting Kleisli bicategory. In the case of interest, the
coherence theorem reduces to the following.
\begin{prop}
Suppose that $T$ is a pseudomonad on any 2-category $\mathscr{C}$.
Then any two pasting diagrams (which are formal in the sense of the
following Remark) constructed from the pseudomonad data with the same
outside must be equal.
\end{prop}

\begin{rem}
Just as in MacLane's theorem, we will need an analogue of a formal
path:
\begin{itemize}
\item (pseudomonoidal form) here formal means it is constructed entirely
from pastings of naturality squares of $u$ and $m$ and the three
given modifications. Any unexpected equalities involving $u$, $m$
and the remaining data (such as the $mT=Tm$ in Isbell's counterexample)
cannot be used in the pasting diagram, even if they are true;
\item (no-iteration form) here formal means it is constructed entirely from
pastings of the given unit and multiplication cells. Any rewriting
of $f$ as $f'$ is not allowed, even if $f=f'$;
\end{itemize}
\end{rem}

We also note that if this proposition was not true, then the definition
of pseudomonad would need extra coherence axioms.

\subsection{A restricted equivalence of the pseudomonoidal and decagon forms}

The following lemma sets the stage for proving the equivalence $\left(ii\right)$
of the monoidal and decagon definitions by directly showing a restricted
version of this equivalence (meaning a number of axioms are left out
of both formulations).
\begin{rem}
For checking equalities of pasting diagrams as below, it is helpful
to note that given two diagrams constructed entirely from the same
data (for example if we are given two pasting diagrams both constructed
from two $\omega_{2}$ cells and one $\Omega$ cell), then showing
they are equal is typically a matter of applying naturality (middle
four interchange). 
\end{rem}

\begin{lem}
\label{ombij} For a given $\lambda$, the data $\left(\omega_{1},\omega_{2},\omega_{3},\omega_{4}\right)$
with axioms \eqref{W1} and \eqref{W2} is in bijection with the data
$\left(\omega_{1},\omega_{2},\Omega\right)$ with axiom \eqref{D1}. 
\end{lem}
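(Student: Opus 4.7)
The plan is to construct mutually inverse maps between the two sets of data and verify the axiom correspondence.

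Going forward: given $(\omega_1,\omega_2,\omega_3,\omega_4)$ satisfying \eqref{W1} and \eqref{W2}, I would assemble $\Omega$ as an explicit pasting built from $\omega_4$ whiskered on the right with $T$ (filling the upper-right half of the decagon, which contains $T\mu T$ followed by $\lambda T$) together with $\omega_3$ whiskered on the right with $PT$ (filling the lower-left half, which contains $PmPT$, $\lambda TPT$, and feeds into $P\lambda T$). These two halves glue along the middle edge through $P^{2}T^{2}$ via the pseudonaturality structure 2-cell $\mu_m\colon Pm\cdot \mu T^{2}\cong \mu T\cdot P^{2}m$ of $\mu$ at $m$, together with the pseudonaturality 2-cell of $\lambda$ needed to relate the upper and lower traversals of the column $TPTPT$.

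Going backward: given $(\omega_1,\omega_2,\Omega)$ satisfying \eqref{D1}, I would recover $\omega_3$ by restricting $\Omega$ along the unit-embedding $T^{2}P\hookrightarrow TPTPT$ obtained by pre-composing with suitable units (an $\eta$ inserted to create the middle $P$ and a $u$ inserted to create the rightmost $T$) and then collapsing the residual units using the triangle $\omega_1$ together with the pseudomonad unit consequences listed in Remark \ref{redundantremark}. Dually, $\omega_4$ is recovered by restricting $\Omega$ along $TP^{2}\hookrightarrow TPTPT$ via the analogous unit-embeddings and collapsing with $\omega_2$.

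To verify this is a bijection, one checks that starting from $(\omega_3,\omega_4)$ the restriction of the assembled $\Omega$ reproduces the original data; this is a direct consequence of the triangle identities together with the interchange 2-cells noted above. Conversely, starting from $\Omega$ satisfying \eqref{D1}, axiom \eqref{D1} is exactly what is needed to show that pasting the restricted $\omega_3$ and $\omega_4$ back together recovers $\Omega$. Under these correspondences the axioms match transparently: restricting \eqref{D1} along the two families of unit-embeddings produces precisely \eqref{W1} and \eqref{W2}, while conversely \eqref{W1} and \eqref{W2} assemble into \eqref{D1} via the pasting definition of $\Omega$.

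The main obstacle will be the careful bookkeeping of the pseudonaturality and coherence 2-cells of $\lambda$, $\mu$, and $m$ that appear in the pasting used to build $\Omega$, and ensuring that the two halves of the decagon glue coherently along their common boundary. Once the correct pasting is written down, the remaining verifications reduce to diagram chases that exploit the triangle identities and the unit consequences of the pseudomonad axioms.
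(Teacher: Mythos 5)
Your overall strategy is the paper's: build $\Omega$ by pasting the two pentagons (suitably whiskered) with interchange cells, and recover the pentagons from $\Omega$ by precomposing with units and collapsing via the triangles. However, two concrete steps in your description would fail as written. First, the bookkeeping of the triangles in the backward direction is swapped: to recover $\omega_{3}$ you insert an $\eta$ to create the middle $P$ (as you say), but the cell that collapses $\lambda$ precomposed with that inserted $\eta$ is $\omega_{2}$ (whiskered as $\omega_{2}TPT$), not $\omega_{1}$ --- $\omega_{1}$ relates $\lambda\cdot uP$ to $Pu$ and simply does not typecheck against an inserted $\eta$. Dually, recovering $\omega_{4}$ uses two $u$-insertions ($TP^{2}u$ and then $TPuPT$) collapsed by $TP\omega_{1}T$ together with the unit law of $T$; it is not collapsed by $\omega_{2}$. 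The residual rightmost $u$ in the $\omega_{3}$ case is handled by the unit modification of the pseudomonad $T$, not by either distributive-law triangle.

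Second, in the forward direction the whiskering of $\omega_{3}$ is off: the cell occupying the lower-left region of the decagon is $P\omega_{3}T$ (source $PT^{2}PT$), not ``$\omega_{3}$ whiskered on the right with $PT$'' (which would have source $T^{2}P^{2}T$, an object not appearing in the decagon); the edge $\lambda TPT$ you attribute to that region actually belongs to the pseudonaturality square of $\lambda$ at $\lambda T$ that glues the top row to $P\omega_{3}T$. Your identification of $\omega_{4}T$ for the upper-right region and of $\mu_{m}$ as one of the gluing cells is correct, though the gluing also needs the associativity coherence of $T$ (the square $P^{2}Tm;P^{2}m$ versus $P^{2}mT;P^{2}m$). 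With these corrections your argument aligns with the paper's, which likewise leaves the round-trip and axiom verifications as a routine (if lengthy) pasting computation.
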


\begin{proof}
From the modifications comprising the pentagons $\omega_{3}$ and
$\omega_{4}$, the decagon $\Omega$ is constructed as the pasting
diagram

\[
\xymatrix{TPTPT\ar[r]^{TP\lambda T}\ar[d]_{\lambda TPT} & TP^{2}T^{2}\ar[r]^{TP^{2}m}\ar[d]_{\lambda PT^{2}} & TP^{2}T\ar[r]^{T\mu T}\ar[d]_{\lambda PT} & TPT\ar[dd]^{\lambda T}\\
PT^{2}PT\ar[dd]_{PmPT}\ar[r]_{PT\lambda T} & PTPT^{2}\ar[d]_{P\lambda T^{2}}\ar[r]^{PTPm} & PTPT\ar[d]_{P\lambda T}\dtwocell[0.45]{r}{\omega_{4}T} & \;\\
\dtwocell[0.3]{r}{P\omega_{3}T} & P^{2}T^{3}\ar[d]_{P^{2}mT}\ar[r]^{P^{2}Tm} & P^{2}T^{2}\ar[d]_{P^{2}m}\ar[r]_{\mu T^{2}} & PT^{2}\ar[d]^{Pm}\\
PTPT\ar[r]_{P\lambda T} & P^{2}T^{2}\ar[r]_{P^{2}m} & P^{2}T\ar[r]_{\mu T} & PT
}
\]
Conversely, given the decagon $\Omega$ one recovers the pentagon
$\omega_{4}$ as
\[
\resizebox{1\textwidth}{!}{\xymatrix{ &  & \; &  &  & TP\ar@/^{0pc}/[d]^{TPu}\ar@/^{0.5pc}/[dr]^{\lambda}\\
TP^{2}\ar[r]^{TP^{2}u}\ar[d]_{\lambda P}\ar@/^{2.7pc}/[rrrrru]^{T\mu} & TP^{2}T\ar[r]^{TPuPT}\ar@/^{1.8pc}/[rr]^{TP^{2}uT}\ar[d]_{\lambda PT}\ar@/^{3.2pc}/[rrr]^{\textnormal{id}} & TPTPT\ar[r]^{TP\lambda T}\ar[d]_{\lambda TPT}\dtwocell[0.5]{rrrdd}{\Omega}\dtwocell[0.35]{u}{TP\omega_{1}T} & TP^{2}T^{2}\ar[r]^{TP^{2}m} & TP^{2}T\ar[r]^{T\mu T} & TPT\ar[d]^{\lambda T} & PT\ar@/^{0pc}/[dl]^{PTu}\ar@/^{1.5pc}/[ldd]^{\textnormal{id}}\\
PTP\ar[r]^{PTPu}\ar@/_{0pc}/[rd]_{P\lambda} & PTPT\ar[r]^{PTuPT}\ar@/_{0.5pc}/[rd]_{\textnormal{id}} & PT^{2}PT\ar[d]_{PmPT} &  &  & PT^{2}\ar[d]^{Pm}\\
 & P^{2}T\ar@/_{1.2pc}/[rr]_{P^{2}Tu}\ar@/_{2.5pc}/[rrr]_{\textnormal{id}} & PTPT\ar[r]_{P\lambda T} & P^{2}T^{2}\ar[r]_{P^{2}m} & P^{2}T\ar[r]_{\mu T} & PT
}
}
\]
and the pentagon $\omega_{3}$ as
\[
\resizebox{1\textwidth}{!}{\xymatrix{ & TPT\ar[r]^{TPTu}\ar@/^{1.5pc}/[rr]^{\textnormal{id}} & TPT^{2}\ar[rd]^{T\eta PT^{2}}\ar[r]^{TPm} & TPT\ar[rd]^{T\eta PT}\ar@/^{1pc}/[rrd]^{\textnormal{id}}\\
T^{2}P\ar[r]^{T^{2}Pu}\ar[d]_{mP}\ar[ur]^{T\lambda} & T^{2}PT\ar[r]^{T\eta TPT}\ar@<+0ex>@/_{0pc}/[rd]_{\eta T^{2}PT}\ar[d]_{mPT}\ar[ur]^{T\lambda T} & TPTPT\ar[r]^{TP\lambda T}\ar[d]^{\lambda TPT}\dtwocell[0.5]{rrrdd}{\Omega} & TP^{2}T^{2}\ar[r]^{TP^{2}m} & TP^{2}T\ar[r]^{T\mu T} & TPT\ar[d]^{\lambda T}\\
TP\ar[r]_{TPu}\ar[d]_{\lambda}\dtwocell[0.72]{urr}{\omega_{2}TPT} & TPT\ar[dr]_{\eta TPT}\ar[d]_{\lambda T} & PT^{2}PT\ar[d]^{PmPT} &  &  & PT^{2}\ar[d]^{Pm}\\
PT\ar[r]_{PTu}\ar@/_{0.5pc}/[drr]_{\textnormal{id}} & PT^{2}\ar@/_{1pc}/[rr]_{\eta PT^{2}}\ar[dr]_{Pm} & PTPT\ar[r]^{P\lambda T} & P^{2}T^{2}\ar[r]^{P^{2}m} & P^{2}T\ar[r]^{\mu T} & PT\\
 &  & PT\ar[urr]_{\eta PT}\ar@/_{1pc}/[rrru]_{\textnormal{id}}
}
}
\]
Now given $\left(\omega_{1},\omega_{2},\omega_{3},\omega_{4}\right)$
with axioms \eqref{W1} and \eqref{W2}, we may start with the left
hand side of \eqref{D1} and substitute the formula for the decagon
$\Omega$ in terms of $\omega_{3}$ and $\omega_{4}$. The $\omega_{2}$
cell is moved by naturality to the $\omega_{4}$ cell and both are
eliminated by \eqref{W2}, and similarly the $\omega_{1}$ cell and
$\omega_{3}$ cell are eliminated by \eqref{W1}, leaving the identity
or right hand side of \eqref{D1}. Replacing $\Omega$ by its formula
in terms of $\omega_{3}$ and $\omega_{4}$ in the above two diagrams,
we recover $\omega_{3}$ and $\omega_{4}$ by the same process of
moving cells by naturality and eliminating.

Conversely, given $\left(\omega_{1},\omega_{2},\Omega\right)$ with
axiom \eqref{D1}. We may start with the left hand side of \eqref{W1}
and substitute the formula for $\omega_{3}$ in terms of $\Omega$.
This gives a decagon $\Omega$ with a $\omega_{2}$ cell on the left,
and a $\omega_{1}$ cell on the top. This should already look similar
to axiom \eqref{D1}, which can be applied with a minor rearrangement.
The proof of \eqref{W2} is similar. Writing the formula for $\Omega$
in terms of $\omega_{3}$ and $\omega_{4}$ and then substituting
the above formulas for $\omega_{3}$ and $\omega_{4}$ in terms of
$\Omega$, one may move the $\omega_{1}$ cell by naturality to apply
\eqref{D1} leaving only $\Omega$ after some simplification.
\end{proof}

\subsection{Decagons to pseudoalgebras to no-iteration forms}

We now define $\left(iii\right)$ and $\left(v\right)$, and show
these are equivalences. The construction $\left(iii\right)$ is the
following simple rewriting.
\begin{prop}
A pseudodistributive law $\left(\lambda,\omega_{1},\omega_{2},\Omega\right)$
in decagon form with axiom \eqref{W10} gives rise to a pseudodistributive
law $\left(\alpha,\psi,\xi,\Psi\right)$ in pseudoalgebra form. This
also holds with the compatibility axioms \eqref{W10}, \eqref{DC2}
and \eqref{MC1}, \eqref{MC2} removed respectively.
\end{prop}

\begin{proof}
Given a $\lambda\colon TP\to PT$ we define $\alpha\colon TPT\to PT$
as the composite
\[
\xymatrix{TPT\ar[r]^{\lambda T} & PT^{2}\ar[r]^{Pm} & PT}
\]
Fortunately, the decagon $\Omega$ and axiom \eqref{D2a} is constructed
from such composites. Thus we can simply take $\Psi=\Omega$, and
\eqref{M2} is just \eqref{D2a} with these composites relabeled as
$\alpha$. The modifications $\psi$ and $\xi$ are given by
\[
\xymatrix@=1em{TPT\myar{\lambda T}{r}\ltwocell[0.35]{dr}{\omega_{1}T} & PT^{2}\ar[r]^{Pm} & PT &  &  & T^{2}\myar{T\eta T}{rr}\ar[dd]_{m}\ar@/_{0.5pc}/[rrd]_{\eta T^{2}} & \dltwocell[0.3]{d}{\omega_{2}T} & TPT\ar[d]^{\lambda T}\\
 & \; &  &  &  &  & \; & PT^{2}\ar[d]^{Pm}\\
PT\ar[uu]^{uPT}\ar[ruu]_{PuT}\ar@/_{0.75pc}/[rruu]_{\textnormal{id}} &  &  &  &  & T\myard{\eta T}{rr} &  & PT
}
\]
respectively. The reader will notice these both appearing in axiom
\eqref{D1}, thus by relabeling we recover axiom \eqref{M1}.

For compatibility, we note the left of \eqref{MC1} expressed in terms
of the data $\left(\lambda,\omega_{1},\omega_{2},\Omega\right)$ is
\[
\resizebox{0.4\textwidth}{!}{\xymatrix@=1.5em{ &  & T\ar[dd]_{uT}\ar[rr]^{\eta T}\ar@/_{2pc}/[dddd]_{\textnormal{id}} &  & PT\ar[dd]_{uPT}\ar@/^{6pc}/[dddd]^{\textnormal{id}}\ar@/^{3pc}/[ddd]^{PuT}\\
 &  &  &  & \; & \dltwocell[0.6]{dl}{\omega_{1}T}\\
 &  & T^{2}\myar{T\eta T}{rr}\ar[dd]_{m}\ar@/_{0.5pc}/[rrd]_{\eta T^{2}} & \dltwocell[0.3]{d}{\omega_{2}T} & TPT\ar[d]_{\lambda T}\\
 &  &  & \; & PT^{2}\ar[d]_{Pm}\\
 &  & T\myard{\eta T}{rr} &  & PT
}
}
\]
and it is a clear consequence of \eqref{W10} that this simplifies
to $\textnormal{id}_{\eta T}$. Similarly, by substitution the left
of \eqref{MC2} becomes
\[
\resizebox{0.95\textwidth}{!}{\xymatrix@=1.5em{ & T^{3}\myar{T\eta T^{2}}{rr}\ar[dd]_{mT}\ar@/_{0.5pc}/[rrd]_{\eta T^{3}} & \dltwocell[0.3]{d}{\omega_{2}T^{2}} & TPT^{2}\ar[rr]^{TPT\eta T}\ar[d]^{\lambda T^{2}} &  & TPTPT\ar[rr]^{TP\lambda T}\ar[d]^{\lambda TPT} &  & \dtwocell[0.5]{rrdd}{\Omega}TP^{2}T^{2}\ar[rr]^{TP^{2}m} &  & TP^{2}T\ar[rr]^{T\mu T} &  & TPT\ar[d]^{\lambda T}\\
 &  & \; & PT^{3}\ar[rr]^{PT^{2}\eta T}\ar[d]^{PmT} &  & PT^{2}PT\ar[d]^{PmPT} &  &  &  &  &  & PT^{2}\ar[d]^{Pm}\\
 & T^{2}\myard{\eta T^{2}}{rr}\ar@/_{0.3pc}/[rrd]_{m} &  & PT^{2}\ar[rr]_{PT\eta T}\ar@/_{1.5pc}/[rrrr]|-{P\eta T^{2}}\ar@/_{0.3pc}/[rrd]_{Pm} &  & PTPT\ar[rr]_{P\lambda T}\dtwocell[0.4]{d}{P\omega_{2}T} &  & P^{2}T^{2}\ar[rr]^{P^{2}m} &  & P^{2}T\ar[rr]^{\mu T} &  & PT\\
 &  &  & T\ar[rr]_{\eta T} &  & PT\ar[urrrr]_{P\eta T}\ar@/_{0.5pc}/[urrrrrr]_{\textnormal{id}}
}
}
\]
which by \eqref{DC2} reduces to
\[
\xymatrix@=1em{T^{3}\ar[rr]^{T\eta T^{2}}\ar@/_{3.2pc}/[rrrrrrrrdrr]_{\eta T^{3}}\ar[dd]_{mT} & \dltwocell[0.3]{rd}{\omega_{2}T^{2}} & TPT^{2}\ar[rr]^{TPT\eta T}\ar@/_{1.5pc}/[rrrr]|-{TP\eta T^{2}}\ar@/_{1.7pc}/[rrrrrdr]_{\textnormal{id}} &  & TPTPT\ar[rr]^{TP\lambda T}\dtwocell[0.4]{d}{TP\omega_{2}T} &  & TP^{2}T^{2}\ar[rr]^{TP^{2}m}\ar[rrd]_{T\mu T^{2}} &  & TP^{2}T\ar[rrrr]^{T\mu T} &  &  &  & TPT\ar[d]^{\lambda T}\\
 &  & \; &  & \; &  &  &  & TPT^{2}\ar[rrrru]^{TPm}\ar[rr]_{\lambda T^{2}} &  & PT^{3}\ar[rr]_{PTm}\ar[d]_{PmT} &  & PT^{2}\ar[d]^{Pm}\\
T^{2}\ar@/_{1.3pc}/[rrrrrrrrrr]_{\eta T^{2}}\ar@/_{1pc}/[rrrrd]_{m} &  &  &  &  &  &  &  &  &  & PT^{2}\ar[rr]^{Pm} &  & PT\\
 &  &  &  & T\ar@/_{1.4pc}/[rrrrrrrru]_{\eta T}
}
\]
and naturality of $m$ then gives 
\[
\xymatrix@=1em{T^{3}\ar[rr]^{T\eta T^{2}}\ar[ddd]_{mT}\ar[rrdd]_{Tm} &  & TPT^{2}\ar[rr]^{TPT\eta T}\ar@/_{1.5pc}/[rrrr]|-{TP\eta T^{2}}\ar[rrdd]_{TPm} &  & TPTPT\ar[rr]^{TP\lambda T}\dtwocell[0.4]{d}{TP\omega_{2}T} &  & TP^{2}T^{2}\ar[rr]^{TP^{2}m} &  & TP^{2}T\ar[rrrr]^{T\mu T} &  &  &  & TPT\ar[dd]^{\lambda T}\\
 &  &  &  & \;\\
 &  & T^{2}\ar[rr]^{T\eta T}\ar@/_{0pc}/[d]_{m}\ar@/_{2pc}/[rrrrrrrrrr]_{\eta T^{2}} &  & TPT\ar@/_{1pc}/[urrrurrrrr]_{\textnormal{id}}\ar[uurrrr]_{TP\eta T}\dtwocell[0.4]{d}{\omega_{2}T} &  &  &  &  &  &  &  & PT^{2}\ar[d]^{Pm}\\
T^{2}\ar@/_{0pc}/[rr]_{m} &  & T\ar@/_{1.2pc}/[rrrrrrrrrr]_{\eta T} &  &  &  &  &  &  &  &  &  & PT
}
\]
which is the right hand side of \eqref{MC2}.
\end{proof}
We now see how a pseudodistributive law $\left(\alpha,\psi,\xi,\Psi\right)$
in pseudoalgebra form gives rise to a pseudomonad on the Kleisli bicategory,
defining $\left(v\right)$.
\begin{prop}
A pseudodistributive law $\left(\alpha,\psi,\xi,\Psi\right)$ in pseudoalgebra
form gives rise to a compatible pseudomonad $\widetilde{T}$ extending
$T$ to the Kleisli bicategory of $P$. With the compatibility axioms
\eqref{MC1}, \eqref{MC2} removed we have a possibly non-compatible
extension.
\end{prop}

\begin{proof}
Suppose we are given a pseudodistributive law $\alpha\colon TPT\to PT$
in pseudoalgebra form. We will define a pseudomonad $\widetilde{T}$
in pseudoextensive form (as in Definition \ref{pseudomonadextensive})
on the Kleisli bicategory of $\left(P,\eta,\mu\right)$. We define
$\widetilde{T}$ to have the same action on objects as $T$. For each
$X\in\mathbf{Kl}\left(P\right)$, we take our unit $\widetilde{u}_{X}\colon X\rightsquigarrow TX$
to be the composite
\[
\xymatrix{X\ar[r]^{u_{X}} & TX\ar[r]^{\eta_{TX}} & PTX}
\]
Each functor $\mathbf{Kl}\left(P\right)\left(X,TY\right)\to\mathbf{Kl}\left(P\right)\left(TX,TY\right)$,
or more simply $\mathscr{C}\left(X,PTY\right)\to\mathscr{C}\left(TX,PTY\right)$\footnote{This operation may also be denoted by the pseudo-pasting operator
$\left(-\right)^{\lambda}$.}, is defined by sending an $f\colon X\to PTY$ to $\alpha Y\cdot Tf\colon TX\to PTY$.
For each $f\colon X\to PTY$ we take the 2-cell $\phi_{f}\colon f\Rightarrow f^{\widetilde{T}}\cdot\widetilde{u}_{X}$
as the pasting
\[
\xymatrix@=1em{X\ar[rd]_{uX}\ar[r]^{f} & PTY\ar[dr]^{uPTY}\ar@/^{0pc}/[rrr]^{\textnormal{id}} &  & \; & PTY\ar[rd]^{\eta PTY}\ar@/^{0pc}/[rr]^{\textnormal{id}} &  & PTY\\
 & TX\myar{Tf}{r}\ar[dr]_{\eta TX}\dtwocell[0.6]{urr}{\psi Y} & TPTY\ar@/_{0.5pc}/[rru]^{\alpha Y}\ar[rd]^{\eta TPTY} &  &  & P^{2}TY\ar[ru]_{\mu TY}\\
 &  & PTX\ar[r]_{PTf} & PTPTY\ar@/_{0.5pc}/[urr]_{P\alpha Y}
}
\]
for each $X$ we take the 2-cell $\theta_{X}\colon\left(\widetilde{u}_{X}\right)^{\widetilde{T}}\Rightarrow\textnormal{id}_{\widetilde{T}X}$
to be
\[
\xymatrix@=1em{TX\ar[rr]^{TuX}\ar@/_{0.2pc}/[rdr]_{\textnormal{id}} &  & T^{2}X\myar{T\eta TX}{rr}\ar[d]_{mX}\dltwocell[0.4]{drr}{\xi X} &  & TPTX\ar[d]^{\alpha X}\\
 &  & TX\myard{\eta TX}{rr} &  & PTX
}
\]
and for all $f\colon X\to PTY$ and $g\colon Y\to PTZ$ we take $\delta_{g,f}\colon\left(g^{\widetilde{T}}\cdot f\right)^{\widetilde{T}}\Rightarrow g^{\widetilde{T}}\cdot f^{\widetilde{T}}$
as
\[
\xymatrix{TX\myar{Tf}{r} & TPTY\myar{TPTg}{r}\ar[d]_{\alpha Y} & TPTPTZ\ar[d]_{\alpha PTZ}\myar{TP\alpha Z}{r} & TP^{2}TZ\ar[r]^{T\mu TZ}\dtwocell[0.5]{d}{\Psi Z} & TPTZ\ar[d]^{\alpha Z}\\
 & PTY\myard{PTg}{r} & PTPTZ\ar[r]_{P\alpha Z} & P^{2}TZ\ar[r]_{\mu TZ} & PTZ
}
\]
Note that technically there is no real choice of the pseudomonad data
of $\widetilde{T}$ here, as it is forced by the compatibility conditions
required for an extension to the Kleisli bicategory. Naturality is
clear in the above definitions. Moreover, the two axioms \eqref{M1}
and \eqref{M2} ensure the two coherence conditions of a pseudomonad
in no-iteration form are satisfied.

To ensure compatibility with the Kleisli pseudofunctor $\mathscr{C}\to\mathscr{C}_{P}$
we must provide an isomorphism in the square where both $T$ and $\widetilde{T}$
are defined extensively
\[
\xymatrix@=1em{\mathscr{C}\ar[rr]^{T}\ar[d] &  & \mathscr{C}\ar[d]\\
\mathscr{C}_{P}\ar[rr]_{\widetilde{T}} &  & \mathscr{C}_{P}
}
\]
This is the identity on objects, and for a map $k\colon X\to TY$
in $\mathscr{C}$ it is the isomorphism $\left(\eta TY\cdot k\right)^{\widetilde{T}}\cong\eta TY\cdot k^{T}$
denoted $\xi^{k}$ and constructed as
\[
\xymatrix@=1em{TX\ar[rr]^{Tk}\ar@/_{0.2pc}/[rdr]_{k^{T}} &  & T^{2}Y\myar{T\eta TY}{rr}\ar[d]_{mY}\dltwocell[0.4]{drr}{\xi Y} &  & TPTY\ar[d]^{\alpha Y}\\
 &  & TY\myard{\eta TY}{rr} &  & PTY
}
\]
That $\xi^{k}$ respects the unit $\phi$ is \eqref{MC1}, that $\xi^{k}$
respects the unit $\theta$ is that $\theta_{X}$ must be $\xi^{uX}$,
and finally that $\xi^{k}$ respects the associator $\delta$ is \eqref{MC2}.
\end{proof}

\subsection{Redundancy of pseudomonoidal coherence axioms\label{showred}}

From axioms \eqref{W1} through to \eqref{W5} follow the axioms \eqref{D1}
and \eqref{D2a}\footnote{Note the fact that axiom \eqref{D2a} involves only the decagons (which
are constructed from only the pentagons), and so it is routine to
check that \eqref{D2a} follows from the axioms \eqref{W3}, \eqref{W4},
\eqref{W5} which concern the coherence conditions of only pentagons.
Lemma \ref{ombij} explains \eqref{D1}.}, from which follow three redundant axioms (D3), (D4) and (D5). This
restricted version of the decagon formulation (without the condition
\eqref{DC2} for compatibility) is enough to construct the extension
and show redundancy of the three axioms \eqref{W8}, \eqref{W9} and
\eqref{W10}.

The three redundant axioms (D3), (D4) and (D5) come from considering
the redundant axioms of a pseudomonad in no-iteration form as in Remark
\ref{redundantremark} (being careful to understand these axioms in
the context of the extended pseudomonad $\widetilde{T}$ not $T$),
where the pseudomonad $\widetilde{T}$ is constructed from the decagon
formulation of a pseudodistributive law\footnote{This construction was detailed using the pseudoalgebra formulation,
but may be easily converted to the decagon formulation. Note also
that the construction of $\widetilde{T}$ by itself does not require
any compatibility conditions.}. The middle axiom of Remark \ref{redundantremark} (in the base case
with $f$ an identity) expressed in decagon form is then that
\begin{equation}
\tag{{D4}}\resizebox{0.95\textwidth}{!}{\xymatrix@=1.5em{TPT\ar[d]_{\lambda T}\ar[rr]^{TPTu} &  & TPT^{2}\ar[rr]^{TPT\eta T}\ar[d]^{\lambda T^{2}} &  & TPTPT\ar[rr]^{TP\lambda T}\ar[d]^{\lambda TPT} &  & \dtwocell[0.5]{rrdd}{\Omega}TP^{2}T^{2}\ar[rr]^{TP^{2}m} &  & TP^{2}T\ar[rr]^{T\mu T} &  & TPT\ar[d]^{\lambda T}\\
PT^{2}\ar[rr]^{PT^{2}u}\ar[d]_{Pm} &  & PT^{3}\ar[rr]^{PT^{2}\eta T}\ar[dd]^{PmT} &  & PT^{2}PT\ar[d]^{PmPT} &  &  &  &  &  & PT^{2}\ar[d]^{Pm}\\
PT\ar[rdr]^{PTu}\ar@/_{5pc}/[rrrrrrrrrr]|-{\textnormal{id}} &  &  &  & PTPT\ar[rr]_{P\lambda T}\dtwocell[0.6]{d}{P\omega_{2}T} &  & P^{2}T^{2}\ar[rr]^{P^{2}m}\ar[rrd]_{\mu T^{2}} &  & P^{2}T\ar[rr]^{\mu T} &  & PT\\
 &  & PT^{2}\ar[rur]_{PT\eta T}\ar@/_{1pc}/[rrrur]_{P\eta T^{2}}\ar@/_{1.2pc}/[rrrrrr]|-{\textnormal{id}} &  & \; &  &  &  & PT^{2}\ar[rru]^{Pm}
}
}\label{D4}
\end{equation}
must be equal to
\[
\xymatrix@=1em{ &  &  & TPTPT\ar@/^{0.3pc}/[rd]^{TP\lambda T}\dtwocell[0.6]{d}{TP\omega_{2}T} &  &  & TP^{2}T\ar[rrd]^{T\mu T}\\
TPT\ar[rr]^{TPTu}\ar@/_{2pc}/[rrrrrrrr]|-{\textnormal{id}} &  & TPT^{2}\ar[ur]^{TPT\eta T}\ar@/_{0pc}/[rr]_{TP\eta T^{2}}\ar@/_{1.3pc}/[rrrr]|-{\textnormal{id}} & \; & TP^{2}T^{2}\ar[rr]^{T\mu T^{2}}\ar@/^{0.3pc}/[rru]^{TP^{2}m} &  & TPT^{2}\ar[rr]^{TPm} &  & TPT\ar[r]^{\lambda T} & PT^{2}\ar[r]^{Pm} & PT
}
\]
Following a similar calculation, the leftmost redundant axiom of Remark
\ref{redundantremark} works out to be \eqref{W10} which we also
call (D5), and the rightmost axiom becomes (D3) which we have not
stated since it is not strictly necessary for the proof. In particular,
this gives the following.
\begin{prop}
\label{d5red} In the decagon presentation the axiom \eqref{W10}
is redundant\footnote{The proof of this proposition requires first knowing how to construct
the extension, justifying the ordering.}.
\end{prop}

The axiom \eqref{W10} must also be redundant in the pseudomonoidal
presentation, since the decagon formulation may be constructed from
it. The proof of the redundancy of \eqref{W8} and \eqref{W9} is
slightly more work, so we give it below.
\begin{prop}
In the pseudomonoidal presentation the axioms \eqref{W8}, \eqref{W9}
and \eqref{W10} are redundant and $\left(i\right)$ defines an isomorphism.
\end{prop}

\begin{proof}
Instead of giving the argument for both remaining axioms, we will
show the redundancy of $\eqref{W8}$ in some detail. The argument
for $\eqref{W9}$ is then similar.

Given the left hand side of \eqref{W8}, we start by substituting
the formula for the pentagon $\omega_{3}$ in terms of the decagon
$\Omega$ as in Lemma \ref{ombij} giving
\[
\resizebox{1\textwidth}{!}{\xymatrix{ &  &  & TPT\ar[r]^{TPTu}\ar@/^{1.5pc}/[rr]^{\textnormal{id}} & TPT^{2}\ar[rd]^{T\eta PT^{2}}\ar[r]^{TPm} & TPT\ar[rd]^{T\eta PT}\ar@/^{1pc}/[rrd]^{\textnormal{id}}\\
T^{2}\ar[rr]^{T^{2}\eta}\ar[rd]_{m} &  & T^{2}P\ar[r]^{T^{2}Pu}\ar[d]_{mP}\ar[ur]^{T\lambda} & T^{2}PT\ar[r]^{T\eta TPT}\ar@<+0ex>@/_{0pc}/[rd]_{\eta T^{2}PT}\ar[d]_{mPT}\ar[ur]^{T\lambda T} & TPTPT\ar[r]^{TP\lambda T}\ar[d]^{\lambda TPT}\dtwocell[0.5]{rrrdd}{\Omega} & TP^{2}T^{2}\ar[r]^{TP^{2}m} & TP^{2}T\ar[r]^{T\mu T} & TPT\ar[d]^{\lambda T}\\
 & T\ar[r]_{T\eta}\ar@/_{0.5pc}/[rd]_{\eta T} & TP\ar[r]_{TPu}\ar[d]_{\lambda}\dtwocell[0.72]{urr}{\omega_{2}TPT} & TPT\ar[dr]_{\eta TPT}\ar[d]_{\lambda T} & PT^{2}PT\ar[d]^{PmPT} &  &  & PT^{2}\ar[d]^{Pm}\\
 & \dtwocell[0.5]{ur}{\omega_{2}} & PT\ar[r]_{PTu}\ar@/_{0.5pc}/[drr]_{\textnormal{id}} & PT^{2}\ar@/_{1pc}/[rr]_{\eta PT^{2}}\ar[dr]_{Pm} & PTPT\ar[r]^{P\lambda T} & P^{2}T^{2}\ar[r]^{P^{2}m} & P^{2}T\ar[r]^{\mu T} & PT\\
 &  &  &  & PT\ar[urr]_{\eta PT}\ar@/_{1pc}/[rrru]_{\textnormal{id}}
}
}
\]
We move the bottom $\omega_{2}$ along naturality squares of $u$
and $\eta$, so that we are more ready to apply axiom \eqref{D4},
giving

\[
\resizebox{1\textwidth}{!}{\xymatrix{ &  &  & TPT\ar[r]^{TPTu}\ar@/^{1.5pc}/[rr]^{\textnormal{id}} & TPT^{2}\ar[rd]^{T\eta PT^{2}}\ar[r]^{TPm} & TPT\ar[rd]^{T\eta PT}\ar@/^{1pc}/[rrd]^{\textnormal{id}}\\
T^{2}\ar[rr]^{T^{2}\eta}\ar[d]_{m} &  & T^{2}P\ar[r]^{T^{2}Pu}\ar[d]_{mP}\ar[ur]^{T\lambda} & T^{2}PT\ar[r]^{T\eta TPT}\ar@<+0ex>@/_{0pc}/[rd]_{\eta T^{2}PT}\ar[d]_{mPT}\ar[ur]^{T\lambda T} & TPTPT\ar[r]^{TP\lambda T}\ar[d]^{\lambda TPT}\dtwocell[0.35]{rrrdd}{\Omega} & TP^{2}T^{2}\ar[r]^{TP^{2}m} & TP^{2}T\ar[r]^{T\mu T} & TPT\ar[d]^{\lambda T}\\
T\ar[rr]_{T\eta}\ar[drr]_{Tu}\ar[rddr]_{\eta T} &  & TP\ar[r]_{TPu}\dtwocell[0.72]{urr}{\omega_{2}TPT} & TPT\ar[dr]_{\eta TPT} & PT^{2}PT\ar[d]^{PmPT} &  & P^{2}T\ar[dr]^{\mu T} & PT^{2}\ar[d]^{Pm}\\
 &  & T^{2}\ar[ur]_{T\eta T}\ar[rd]_{\eta T^{2}} &  & PTPT\ar[r]^{P\lambda T}\dtwocell[0.3]{d}{P\omega_{2}T} & P^{2}T^{2}\ar[ru]^{P^{2}m}\ar[r]_{\mu T^{2}} & PT^{2}\ar[r]_{Pm} & PT\\
 &  & PT\ar@/_{0pc}/[r]^{PTu}\ar@/_{2pc}/[rrrrru]_{\textnormal{id}} & PT^{2}\ar[ur]^{PT\eta T}\ar@/_{0.3pc}/[rur]_{P\eta T^{2}}\ar@/_{1pc}/[rurr]_{\textnormal{id}} & \;\\
\\
}
}
\]
and we also restrict the $\omega_{2}TPT$ to a $\omega_{2}T$ to look
more like the right hand side of \eqref{W8}, giving

\[
\resizebox{1\textwidth}{!}{\xymatrix{ &  &  & TPT\ar[dr]^{TPTu}\ar@/^{0.5pc}/[rrd]^{\textnormal{id}}\\
 & T^{2}P\ar[rur]^{T\lambda}\ar[rrud]^{T^{2}Pu}\ar[rrd]_{T\eta TP} &  & T^{2}PT\ar[dr]^{T\eta TPT}\ar[udr]^{T\lambda T} & TPT^{2}\ar[rd]^{T\eta PT^{2}}\ar[r]^{TPm} & TPT\ar[rd]^{T\eta PT}\ar@/^{1pc}/[rrd]^{\textnormal{id}}\\
T^{2}\ar[ru]^{T^{2}\eta}\ar[d]_{m}\ar@/_{0.7pc}/[rrdd]_{\eta T^{2}}\ar[rrd]_{T\eta T} &  &  & TPTP\ar[r]_{TPTPu} & TPTPT\ar[r]^{TP\lambda T}\ar[d]^{\lambda TPT}\dtwocell[0.35]{rrrdd}{\Omega} & TP^{2}T^{2}\ar[r]^{TP^{2}m} & TP^{2}T\ar[r]^{T\mu T} & TPT\ar[d]^{\lambda T}\\
T\ar[rddr]_{\eta T} & \dtwocell[0.25]{dr}{\omega_{2}T} & TPT\ar[d]_{\lambda T}\ar[ur]_{TPT\eta}\ar[r]_{TPTu} & TPT^{2}\ar[ru]_{TPT\eta T}\ar[d]_{\lambda T^{2}} & PT^{2}PT\ar[d]^{PmPT} &  & P^{2}T\ar[dr]^{\mu T} & PT^{2}\ar[d]^{Pm}\\
 & \; & PT^{2}\ar[d]_{Pm}\ar[r]^{PT^{2}u} & PT^{3}\ar[d]_{PmT}\ar[ru]^{PT^{2}\eta T} & PTPT\ar[r]^{P\lambda T}\dtwocell[0.3]{d}{P\omega_{2}T} & P^{2}T^{2}\ar[ru]^{P^{2}m}\ar[r]_{\mu T^{2}} & PT^{2}\ar[r]_{Pm} & PT\\
 &  & PT\ar@/_{0pc}/[r]^{PTu}\ar@/_{2pc}/[rrrrru]_{\textnormal{id}} & PT^{2}\ar[ur]^{PT\eta T}\ar@/_{0.3pc}/[rur]_{P\eta T^{2}}\ar@/_{1pc}/[rurr]_{\textnormal{id}} & \;
}
}
\]
Now applying axiom \eqref{D4} the above reduces to
\[
\resizebox{1\textwidth}{!}{\xymatrix{ & TPT\ar@/^{0.7pc}/[rrr]^{TPTu}\ar@/^{2.3pc}/[rrrrrr]^{\textnormal{id}} &  & T^{2}PT\ar[rd]_{T\eta TPT}\ar[r]_{T\lambda T} & TPT^{2}\ar@/^{1pc}/[rdd]^{T\eta PT^{2}}\ar[rrr]^{TPm} &  &  & TPT\ar[d]_{T\eta PT}\ar@/^{1pc}/[rrdd]^{\textnormal{id}}\\
 & T^{2}P\ar[rr]^{T\eta TP}\ar[rru]^{T^{2}Pu}\ar[u]_{T\lambda} &  & TPTP\ar[r]_{TPTPu} & TPTPT\ar@/^{0.3pc}/[rd]^{TP\lambda T}\dtwocell[0.6]{d}{TP\omega_{2}T} &  &  & TP^{2}T\ar[rrd]^{T\mu T}\\
T^{2}\ar[r]^{T\eta T}\ar@/_{4pc}/[rrrrrrrrrr]_{\eta T^{2}}\ar@/_{1pc}/[rrrrrdd]_{m}\ar[ur]^{T^{2}\eta} & TPT\ar[rr]^{TPTu}\ar@/_{2pc}/[rrrrrrrr]|-{\textnormal{id}}\ar[rru]_{TPT\eta} &  & TPT^{2}\ar[ur]^{TPT\eta T}\ar@/_{0pc}/[rr]_{TP\eta T^{2}}\ar@/_{1.3pc}/[rrrr]|-{\textnormal{id}} & \;\dtwocell[1]{d}{\omega_{2}T} & TP^{2}T^{2}\ar[rr]^{T\mu T^{2}}\ar@/^{0.3pc}/[rru]^{TP^{2}m} &  & TPT^{2}\ar[rr]^{TPm} &  & TPT\ar[r]^{\lambda T} & PT^{2}\ar[r]^{Pm} & PT\\
 &  &  &  & \;\\
 &  &  &  &  & T\ar@/_{1pc}/[rrrruurr]_{\eta T}
}
}
\]
From here, it is just a matter of restricting the cell $TP\omega_{2}T$
to a $T\omega_{2}$ by naturality. Simplifying, this is the right
hand side of \eqref{W8}.
\end{proof}
\begin{rem}
Whilst one can certainly use a similar version of the above to show
the axiom \eqref{W9} is redundant, doing so is not necessary. This
is because we may construct a compatible extension to the Kleisli
without ever using it.
\end{rem}

\begin{rem}
The decagon formulation includes two coherence conditions which ensure
the extension is compatible with the Kleisli functor $\mathscr{C}\to\mathscr{C}_{P}$.
The first property (DC1) is simply \eqref{W10} or (D5) which we have
already explained is redundant. The second property \eqref{DC2} is
almost \eqref{D4} but without the restriction along the unit $TPTu\colon TPT\to TPT^{2}$. 
\end{rem}

\begin{rem}
\label{redatt} The redundancy of \eqref{W8} almost gives a proof
of the redundancy of axiom \eqref{DC2}. In particular, the left of
\eqref{DC2} is
\[
\resizebox{0.8\textwidth}{!}{\xymatrix{TPT^{2}\ar[d]_{\lambda T^{2}}\ar[r]^{TPT\eta T} & TPTPT\ar[r]^{TP\lambda T}\ar[d]_{\lambda TPT} & TP^{2}T^{2}\ar[r]^{TP^{2}m}\ar[d]_{\lambda PT^{2}} & TP^{2}T\ar[r]^{T\mu T}\ar[d]_{\lambda PT} & TPT\ar[dd]^{\lambda T}\\
PT^{3}\ar[dd]_{PmT}\ar[r]^{PT^{2}\eta T} & PT^{2}PT\ar[dd]_{PmPT}\ar[r]_{PT\lambda T} & PTPT^{2}\ar[d]_{P\lambda T^{2}}\ar[r]^{PTPm} & PTPT\ar[d]_{P\lambda T}\dtwocell[0.45]{r}{\omega_{4}T} & \;\\
 & \dtwocell[0.3]{r}{P\omega_{3}T} & P^{2}T^{3}\ar[d]_{P^{2}mT}\ar[r]^{P^{2}Tm} & P^{2}T^{2}\ar[d]_{P^{2}m}\ar[r]_{\mu T^{2}} & PT^{2}\ar[d]^{Pm}\\
PT^{2}\ar[r]_{PT\eta T}\ar@/_{2pc}/[rr]|-{P\eta T^{2}}\ar@/_{0.5pc}/[rd]_{Pm} & PTPT\ar[r]_{P\lambda T}\dtwocell[0.4]{d}{P\omega_{2}T} & P^{2}T^{2}\ar[r]^{P^{2}m} & P^{2}T\ar[r]^{\mu T} & PT\\
 & PT\ar[urr]_{P\eta T}\ar@/_{0.5pc}/[urrr]_{\textnormal{id}}
}
}
\]
and one can immediately simplify using \eqref{W8}. The issue is \eqref{W7}
would then be required to simplify to the right of \eqref{DC2}, which
likely cannot be shown here.
\end{rem}

\begin{cor}
The assignment $\left(ii\right)$ is a functor and defines an isomorphism.
\end{cor}

\begin{proof}
Since all remaining axioms \eqref{W6} to \eqref{W10} may be deduced
from \eqref{W1} to \eqref{W5}, we know that the remaining condition
\eqref{DC2} must also follow, as we may simplify the diagram of Remark
\ref{redatt}. It follows that $\left(ii\right)$ is a well defined
functor. 

We also point out that from the full decagon form we get the pseudoalgebra
and then no-iteration form above which is a compatible extension to
the Kleisli, and thus axioms \eqref{W3} to \eqref{W5} may be recovered
(because we necessarily get a full pseudodistributive law). Hence
Lemma \ref{ombij} extends to an isomorphism $\left(ii\right)$. 
\end{proof}

\subsection{Equivalence of decagon and pseudoalgebra forms}
\begin{cor}
\label{co3-5} The functor $\left(iii\right)$ and thus also $\left(v\right)$
define equivalences.
\end{cor}

\begin{proof}
Note that the diagram \eqref{main} pseudo-commutes, and so the composite
\[
\xymatrix{\lambda\textnormal{ dec}\myar{\left(iii\right)}{r} & \alpha\textnormal{ ps-alg}\ar[r] & \lambda\textnormal{ dec}}
\]
is isomorphic to the identity, where the second arrow denotes $\left(v\right)$
composed with the other equivalences. The more complex part is checking
that
\[
\xymatrix@R=0.5em{\alpha\textnormal{ ps-alg}\ar[r] & \lambda\textnormal{ dec}\myar{\left(iii\right)}{r} & \alpha\textnormal{ ps-alg}\\
\alpha\ar@{|->}[r] & \alpha\cdot TPu\ar@{|->}[r] & Pm\cdot\alpha T\cdot TPuT
}
\]
is also isomorphic to the identity\footnote{We omit the level of modifications since it is mostly trivial pastings
constructed from the isomorphism data.}. This coherent isomorphism may be constructed directly as the pasting
\begin{equation}
\tag{{G1}}\xymatrix@=1.5em{TPT\ar[d]_{TPuT}\ar[rr]_{TP\eta T}\ar@/^{1.8pc}/[rrrrrrd]^{\textnormal{id}} &  & TP^{2}T\ar[d]_{TPuPT}\ar@/^{0.5pc}/[rrd]^{\textnormal{id}} & \dtwocell[0.5]{dl}{TP\psi}\\
TPT^{2}\ar[d]_{\alpha T}\ar[rr]_{TPT\eta T} &  & TPTPT\ar[rr]^{TP\alpha}\ar[d]_{\alpha PT}\dtwocell[0.5]{rdrrr}{\Psi} &  & TP^{2}T\ar[rr]^{T\mu T} &  & TPT\ar[d]^{\alpha}\\
PT^{2}\ar[rr]_{PT\eta T}\ar[rrd]_{Pm} &  & PTPT\ar[rr]_{P\alpha}\dtwocell[0.5]{d}{P\xi} &  & P^{2}T\ar[rr]_{\mu T} &  & PT\\
 &  & PT\ar[rru]_{P\eta T}\ar@/_{0.5pc}/[rrurr]_{\textnormal{id}}
}
\label{G1}
\end{equation}
which shows that $\left(iii\right)$ is an equivalence, and as $\left(v\right)$
belongs to a pseudo-commuting square where everything else is an equivalence
it must be one itself.
\end{proof}
\begin{rem}
By a similar argument one may show $\alpha$ respects the multiplication
of $T$. More explicitly, we have an isomorphism $Pm\cdot\alpha T\cong\alpha\cdot TPm$
constructed as the pasting
\begin{equation}
\tag{{G2}}\xymatrix@=1.5em{TPT^{2}\ar[dd]_{\alpha T}\ar[rrd]_{TPT\eta T\quad}\ar[rrrr]^{TPm} &  & \dtwocell[0.45]{dr}{TP\xi^{-1}} &  & TPT\ar[d]^{TP\eta T}\ar@/^{0.3pc}/[rrd]^{\textnormal{id}}\\
 &  & TPTPT\ar[rr]^{TP\alpha}\ar[d]_{\alpha PT}\dtwocell[0.5]{drrrr}{\Psi} &  & TP^{2}T\ar[rr]^{T\mu T} &  & TPT\ar[d]^{\alpha}\\
PT^{2}\ar[rr]_{PT\eta T}\ar[rrd]_{Pm} &  & PTPT\ar[rr]_{P\alpha}\dtwocell[0.5]{d}{P\xi} &  & P^{2}T\ar[rr]_{\mu T} &  & PT\\
 &  & PT\ar[rru]_{P\eta T}\ar@/_{0.5pc}/[rrurr]_{\textnormal{id}}
}
\label{G2}
\end{equation}
It is an interesting observation that this construction requires $\xi$
to be invertible, whereas the simplified restriction along $TPuT$
given by \eqref{G1} does not. This means whilst Corollary \ref{co3-5}
may generalize from an equivalence to an adjunction in the skew setting,
the invertibility of $\xi$ would still be required for much of the
theory.
\end{rem}

\subsection{Equivalence of pseudoalgebra, no-iteration and warping forms}

The equivalence of the pseudoalgebra and no-iteration forms relies
on the equivalence of 2-cells in a tricategory and pseudo-pasting
operators, generalizing the result \cite[Lemma 2.2]{noiteration1d}
in dimension one.
\begin{lem}
For $s,t$ and $u$ configured such that $st$ exists and has the
same domain and codomain as $u$, the pseudo-pasting operators of
Definition \ref{pasting}\footnote{It is possible the general definition of pseudo-pasting operator (where
one lacks this configuration) would have extra axioms. Our formulation
is based on what is required in the proof of this Lemma.} are in equivalence with 2-cells $ts\Rightarrow u$.
\end{lem}

\begin{proof}
As in \cite[Lemma 2.2]{noiteration1d} we use the same method of writing
a general $\vartheta^{\#}$ in standard form as $\textnormal{id}_{s}^{\#}\cdot g\cdot t\vartheta$,
so that pasting operators are again determined by their operation
on identities. The only issue here is that the rewriting is only the
same up to isomorphism (since blistering is only an isomorphism),
and to ensure this is an isomorphism of pasting operators we require
the blistering data of $\left(-\right)^{\#}$ to be natural and respect
whiskering and blistering. This is why Definition \ref{pasting} contains
these three respective conditions.
\end{proof}
We now finish the proof of Theorem \ref{pseudodistequiv}, by proving
the equivalences $\left(iv\right)$ and $\left(vi\right)$ of the
pseudoalgebra, no-iteration and warping formulations of a pseudodistributive
law.
\begin{prop}
The pseudoalgebra and no-iteration formulations of a pseudodistributive
law are in equivalence.
\end{prop}

\begin{proof}
Firstly, note that given a no-iteration pseudodistributive law $\left(-\right)^{\lambda}$
we may take $f,g,h$ and $k$ to be identities (and even take $X,Y,Z$
to be the identity on $\mathscr{C}$). From this the data of the pseudoalgebra
version is recovered with $\psi$, $\xi$ and $\Psi$ respectively
given by (where $\alpha$ is the action of $\left(-\right)^{\lambda}$
on an identity)
\[
\xymatrix@=1em{PT\ar@/^{0pc}/[dd]_{uPT}\ar@/^{1pc}/[rrdd]^{\textnormal{id}} &  &  &  &  & T^{2}\ar[dd]_{\textnormal{id}_{T}^{T}=m}\ar@/^{1pc}/[rrdd]^{\left(\eta TY\right)^{\lambda}=\alpha\cdot T\eta T}\\
\; & \dltwocell[0.1]{l}{\psi^{\textnormal{id}}} &  &  &  & \; & \dltwocell[0.1]{l}{\xi^{\textnormal{id}}}\\
TPT\ar[rr]_{\textnormal{id}_{PT}^{\lambda}=\alpha} & \; & PT &  &  & T\myard{\eta T}{rr} &  & PT
}
\]
and
\[
\xymatrix@=1em{TPTPT\ar[dd]_{\textnormal{id}_{PTPT}^{\lambda}=\alpha PT}\ar@/^{1pc}/[rrrrrrrrdd]^{\qquad\qquad\qquad\qquad\left(\left(\textnormal{id}_{PT}^{\lambda}\right)^{P}\right)^{\lambda}=\left(\alpha^{P}\right)^{\lambda}=\left(\mu T\cdot P\alpha\right)^{\lambda}=\alpha\cdot T\mu T\cdot TP\alpha}\\
\; & \; & \dltwocell[0.2]{l}{\Psi^{\textnormal{id},\textnormal{id}}}\\
PTPT\ar[rrrrrrrr]_{\left(\textnormal{id}_{PT}^{\lambda}\right)^{P}=\alpha^{P}=\mu T\cdot P\alpha} &  &  &  &  &  &  &  & PT
}
\]
We have used here the fact $\left(-\right)^{\lambda}$ must be defined
by sending an $f\colon X\to PTY$ to the composite
\[
\xymatrix{TX\myar{Tf}{r} & TPTY\myar{\alpha Y}{r} & PTY}
\]
and a 2-cell $\vartheta\colon f\Rightarrow g\colon X\to PTY$ to the
whiskering $\alpha Y\cdot T\vartheta$, since $\left(-\right)^{\lambda}$
respects whiskering and blistering. Note also that $\psi$, $\xi$
and $\Psi$ become modifications by a similar argument to \cite[Prop. 3.4 and 3.5]{NoIteration}.
Axioms \eqref{I1} and \eqref{I2} respectively become \eqref{M1}
and \eqref{M2} by similarly substituting identities. In this calculation,
it is worth noting that the application of $\left(-\right)^{\lambda}$
to $\psi$ is what gives the $T\psi$ in \eqref{M1}. Moreover, the
non-identity cases such as the $\xi^{uY}$ in \eqref{I1} can be reduced
to identity cases, by identifying the two diagrams
\[
\xymatrix@=1em{TY\ar[rrr]^{\left(\eta TY\cdot uY\right)^{\lambda}}\ar[dd]_{\left(uY\right)^{T}} &  &  & PTY\ar@{=}[dd] &  &  &  & TY\ar[rrr]^{\left(\eta TY\cdot uY\right)^{\lambda}}\ar[d]_{TuY} &  &  & PTY\ar@{=}[d]\\
 & \dtwocell[0.5]{r}{\xi^{uY}} & \; &  &  &  &  & T^{2}Y\ar[d]_{mY}\ar[rrr]^{\left(\eta TY\right)^{\lambda}} & \dtwocell[0.5]{rd}{\xi^{\textnormal{id}}} & \; & PTY\ar@{=}[d]\\
TY\ar[rrr]_{\eta TY} &  &  & PTY &  &  &  & TY\ar[rrr]_{\eta TY} &  & \; & PTY
}
\]
using that the family $\xi^{k}$ respects blistering\footnote{Both $\xi^{uY}$ and $\xi^{\textnormal{id}}$ play a central role
in this paper, as the unit law data $\theta$ in the extended pseudomonad,
and as the data $\xi$ in pseudoalgebra form respectively. These two
pieces of data are combined into a single family $\xi^{k}$ which
cohere via the blistering operation.} and is natural in $k$.

Conversely, given the pseudoalgebra version one will recover the no-iteration
version by defining the pasting operator $\left(-\right)^{\lambda}\colon\mathscr{C}\left(-,PT-\right)\to\mathscr{C}\left(T-,PT-\right)$
by the same formula $f\mapsto\alpha Y\cdot Tf$. It is then easy to
construct $\psi^{f}$ from $\psi$, $\xi^{k}$ from $\xi$, and $\Psi^{g,f}$
from $\Psi$. This is done by taking blisterings to upgrade from identities
to general maps; for example $\xi^{uY}$ is constructed from $\xi^{\textnormal{id}}=\xi$
by $uY$-blistering as in the above diagram. In much the same way,
the coherence axioms \eqref{M1} and \eqref{M2} directly give \eqref{I1}
and \eqref{I2} for the identity case, and are upgraded to general
maps (such as the $g$ in \eqref{I1}) by blistering. By the same
method we see the coherence axioms \eqref{MC1} and \eqref{MC2} give
\eqref{IC1} and \eqref{IC2}.
\end{proof}
\begin{prop}
The no-iteration and warping formulations of a pseudodistributive
law are in isomorphism.
\end{prop}

\begin{proof}
This is simply a matter of restating the axioms on the morphisms of
pseudo-pasting operators $\left(-\right)^{\psi},\left(-\right)^{\xi}$
and $\left(-\right)^{\Psi}$ given in Definition \ref{pswarp} in
terms of the components. The four coherence axioms correspond respectively
to the four of the no-iteration form of Definition \ref{psnoit}.
\end{proof}

\section{Declaration Statement}

\subsection{Author's contribution}

This was entirely the work of the only author.

\subsection{Conflict of interest}

The author declares he has no conflict of interest.

\subsection{Availability of data and materials}

Not applicable.

\subsection{Funding}

The author acknowledges that this work was supported by the Operational
Programme Research, Development and Education Project \textquotedblleft Postdoc@MUNI\textquotedblright{}
(No. CZ.02.2.69/0.0/0.0/18\_053/0016952).

\bibliographystyle{siam}
\bibliography{references}

\end{document}